\newcommand{\p}{{\mathbb{P}_{n,\beta}}}
\begin{document}

\title[Phase transition in the two star Exponential Random Graph Model]{Phase transition in the two star Exponential Random Graph Model}

\author[S.\ Mukherjee]{Sumit Mukherjee$^\dagger$}  

\address{$\dagger$ Department of Statistics, Stanford University
\newline\indent Sequoia Hall, 390 Serra Mall, Stanford, California 94305}

\date{\today}


\keywords{{ERGM, Swendsen-Wang, Phase Transition}}

\subjclass[2010]{05C80, 62P25 }


\maketitle

\begin{abstract}

This paper gives a way to simulate from  the  two star probability distribution on the space of simple graphs via auxiliary variables. Using this simulation scheme, the model is explored for various domains of the parameter values, and the phase transition boundaries are identified, and shown to be  similar as that of the Curie-Weiss model of statistical physics.  Concentration results are obtained for all the degrees, which further validate the phase transition predictions. 
\end{abstract}
\section{ Introduction}

A great number of models are used to do statistical analysis on network and graph data. This paper focuses on a simple model which goes beyond the Erdos Renyi model, namely the two star model studied in  \cite{PN}. By a two star is meant the following simple graph on $3$ vertices :
\begin{figure}[htbp]
\begin{center}
\includegraphics[height=2in,width=2in]{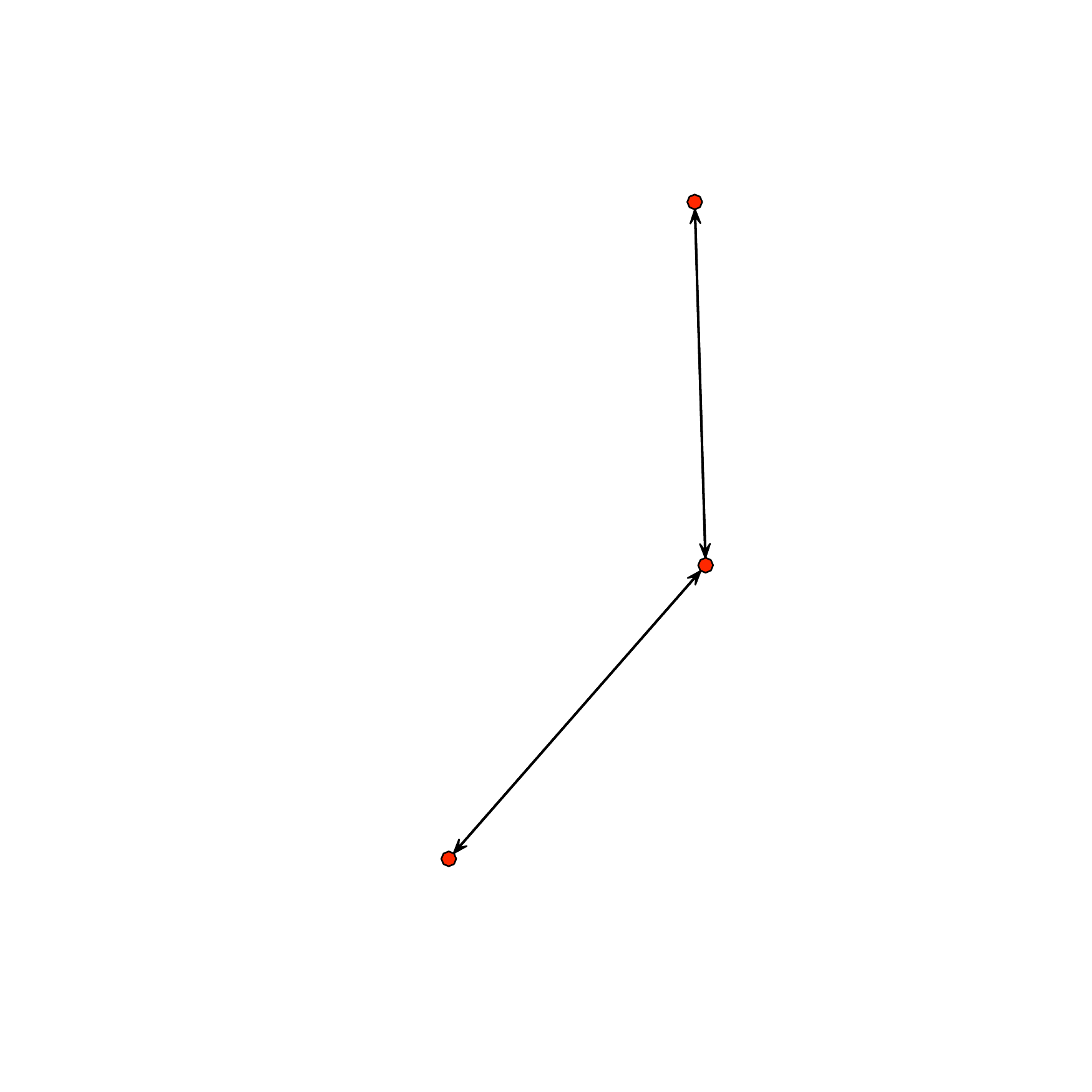} 
 \end{center}
\end{figure}

The two star model is the simplest of a wide class of models known as exponential random graph models(ERGM). This class of models were first studied by Holland and Leinhardt in \cite{HL}, and later developed by Strauss  in \cite{Strauss} and \cite{FS}. ERGM's are frequently used for modeling network data, the most common application area being social networks. For examples of such applications see \cite{ACW}, \cite{Newman}, \cite{PW}, \cite{RPKL},\cite{Snijders}, \cite{WF} and the references therein. ERGM produces a set of natural probability distributions on graphs, where the user can specify his/her choice of sufficient statistics  for the model. Below is given the formal definition of an ERGM. 
\\

\subsection{ Definition of ERGM}
For $n\in\mathbb{N}$ be a positive integer, let $\mathcal{X}_n$ denote the space of all simple graphs with vertices labeled $[n]:=\{1,2,\cdots,n\}$ .  Since a simple graph is uniquely identified by its adjacency matrix, a graph can be identified with its adjacency matrix, w.l.o.g.   $\mathcal {X}_n$ can be taken to be  the set of all symmetric $n\times n$ matrices ,with $0$ on the diagonal elements and $ \{0/1\}$ on the off-diagonal elements.  For $1\leq i\leq k$, let $T_i: {\mathcal X}_n\rightarrow \mathbb{R}$ be real valued statistics on the space of graphs. An ERGM with sufficient statistics $\{T_i,1\leq i\leq l\}$ is a probability distribution on $\mathcal{X}_n$ with probability mass function $$\frac{1}{Z_n(\beta)}\text{ exp}\{\sum_{i=1}^l\beta_iT_i(x)\},$$ where  $x=((x_{ij}))_{n\times n}\in \mathcal{X}_n,\beta=(\beta_1,\cdots,\beta_l)\in \mathbb{R}^k$ is the unknown parameter and $Z_n(\beta)$ is the normalizing constant. In this paper, only sufficient statistics considered are  sub-graph counts, for e.g.  number of edges ($\sum_{i< j}x_{ij}$), number of two stars ($\sum_{i}\sum_{j<l,j,l\neq i}x_{ij}x_{il}$), number of triangles ($\sum_{i<j<l}x_{ij}x_{jl}x_{li}$), etc. 
\\



One of the main difficulties in estimation theory of these models is that the normalizing constant $Z_n(\beta)$ is not available in closed form. Explicit computation of the partition function takes time which is exponential in $n$, and so the calculation of MLE becomes infeasible.  One way out is to compute the MCMCMLE (see \cite{GT}), which approximates the  partition function by estimating it via Markov Chain Monte Carlo. Another way around is to compute the pseudo-likelihood estimator of Besag (\cite{B1},\cite{B2}), which depends only on the conditional distribution of one edge given the rest, which are easy to compute. However  theoretical properties of these estimators are  poorly understood in case of ERGM. Some recent progress  has been made in the theoretical properties on the ERGM models in the papers \cite{BSB} in (2008), and \cite{CD} in (2011), which is described below.
\\

\subsection{ Previous work}

Let $(G_i,1\leq i\leq l)$ denote a number of simple graphs, with $G_i=(V_i,E_i)$ denote the vertex set and the edge set respectively. Assume that $G_1$ is an just an edge, i.e. a graph with two vertices connected by an edge. The term in the exponent of an ERGM will also be referred to as the Hamiltonian, in analogy with Physics literature. 
\\

Consider  an ERGM with Hamiltonian of the form $n^2\sum\limits_{i=1}^l\beta_i\frac{N_{G_i}(x)}{n^{|V_i|}}$, where $N_{G_i}(x)$ denote the number of copies of  $G_i$'s in the graph $x$.  The parameter set   of interest is $\{\beta:\beta_i\geq 0,2\leq i\leq l\}\subset\R^l$, which will be referred to as the non-negative parameter domain. Note that the edge parameter need not be non negative in the non-negative domain. 
\\
For $0<p<1$, set \[\psi(p):=\sum\limits_{i=1}^l2\beta_i|E_i|p^{|E_i|-1},\quad \phi(p):=\frac{e^{\psi(p)}}{1+e^{\psi(p)}}.\] Depending on the parameter $\beta$, the equation $\phi(p)=p$ can have one or more solutions.  The main result of \cite{BSB} is the following:
\newline

\begin{itemize}
\item
If $\phi(p)=p$ has a unique root $p_0$ which satisfies $\phi'(p_0)<1$, then $\beta$ is said to be in the high temperature  regime. In this regime, the mixing time of Glauber dynamics is $O(n^2\log n)$, i.e. at most $Cn^2\log n$ for some $C<\infty$.

\item
If $\phi(p)$ has at least two roots both of which satisfy $\phi'(p)<1$, then $\beta$ is said to be in the low temperature regime. In this case, the mixing time of Glauber dynamics takes $e^{\Omega( n)}$ time (at least $ e^{Cn}$ for some $C>0$) to mix.  Further, this holds for any local Markov chain. 
\end{itemize}

\begin{remark}
This means  in particular that for MCMCMLE with a local Markov chain such as Glaubler dynamics, the mixing time of the Markov chain can be very large for some parameter values. Note however that the sampling method described in this paper (see Theorem \ref{global})  uses a non local chain, and so is not covered by the result in \cite[Theorem 6]{BSB}.
\\

As a comment, note that there are parameter values  $\beta$ which are not covered by the two cases of \cite{BSB}. These are referred to as critical points. 
\end{remark}

The main result of  \cite{CD} computes the limiting log partition function for all values of the parameter $\beta$, even outside non-negative domain. The limit obtained is in the form of an optimization problem which might be  intractable in general. In the non-negative domain, the limit  can be expressed in terms of the following $1$-$d$ optimization problem: \[\lim\limits_{n\rightarrow\infty}\frac{1}{n^2}\log Z_n(\beta)=\sup_{0<p<1}\{\sum\limits_{i=1}^k\beta_ip^{|E_i|}-I(p)\},\quad I(p)=\frac{1}{2}p\log p+\frac{1}{2}(1-p)\log(1-p).\]  It is easy to check that maximizing $p$ satisfies the same equation $p=\phi(p)$ as above. 
\\

\cite{CD} also proves  that in the non-negative regime, the model  generates data which are either very close (in the sense of cut-metric) to  an Erdos Renyi, or a mixture of Erdos Renyi. For a discussion on the cut metric, see \cite{CD} and the references therein. Since an Erdos Renyi graph is characterized by one and one parameter only, this seems to suggest that the model might be un-identifiable in the limit, and so if $k\geq 2$ the parameter $\beta$ might not be consistently estimable.  
\\

\subsection{ The two star model defined}

This paper considers a specific ERGM $\mathbb{P}_{n,\beta}$ given by the  Hamiltonian  \[\frac{\beta_2}{n-1}T(x)+\Big({\beta_1}+\frac{\beta_2}{n-1}\Big)E(x),\] where $T(x)=\sum_i\sum_{j<l,j,l\neq i}x_{ij}x_{il}$ and $E(x)=\sum_{i<j}x_{ij}$ are the number of two stars and edges respectively. Mathematically the model is given by
$$\P_{n,\beta}(x)=\frac{1}{Z_n(\beta)}e^{\frac{\beta_2}{n-1}T(x)+\Big({\beta_1}+\frac{\beta_2}{n-1}\Big)E(x)}, \quad x\in \mathcal{X}_n.$$ This is the two star model studied  by Park and Newmann \cite{PN} with parameters $(\beta_1+\beta_2/(n-1),\beta_2)$. The choice of the scaling is done to simplify  computations   later. In this paper the focus  is on non negative domain $\{(\beta_1,\beta_2):\beta_2>0\}$. Note that $\beta_2=0$ corresponds to an Erdos Renyi model with $p_n=\frac{e^{\beta_1+\beta_2/(n-1)}}{1+e^{\beta_1+\beta_2/(n-1)}}$.   Thus the two star model can be thought of as the simplest generalization of Erdos-Renyi model.
\\

\subsection{Outline}

In section 2  auxiliary variables are introduced to transform the discrete problem into a continuous one. As a by product, one obtains a sampling algorithm for the two star model.
 Section 3 uses heuristic analysis of the continuous model  to identify the phase transition boundary for the problem. 
 \\
 
 Section 4 contains  rigorous results confirming the identification of section 3. In particular, setting $(d_1,\cdots,d_n)$ denoting the labelled degrees of the graph $x$, it shows that there is a regime of parameters $\beta$ where $$\P_{n,\beta}\Big(\max_{i=1}^n\Big|\frac{d_i}{n-1}-p_0\Big|>\delta\Big)\le e^{-C(\delta)n}$$ for some $p_0\in (0,1)$,
 i.e. all the scaled degrees are close to one common value with very high probability. This regime will be referred to as the uniqueness regime. On the other hand, there is another regime 
 where all the scaled degrees converge  converge to either one of two points, i.e. there exists two points $0< p_1<p_2< 1$ such that $$\Big|\P_{n,\beta}\Big(\max_{i=1}^n\Big|\frac{d_i}{n-1}-p_j\Big|>\delta\Big)-\frac{1}{2}\Big|\le e^{-C(\delta)n}$$ for $j=1,2$. This regime will be referred to as the non uniqueness regime. This change illustrates the phase transition phenomenon in the two star model.
 
  Further the above decomposition covers the entire non negative domain $\{(\theta_1,\theta_2):\theta_2>0\}$ barring a single point $\theta_1=0,\theta_2=1/2$. This point will be referred to as the critical point.
 
 The proofs of Theorem \ref{ab} and Theorem \ref{ba} require a lot of technical estimates, but the results are easy to justify from a heuristic sense using the phase boundaries of section 3.
 \\
 
Section 5  contains histograms of the scaled degree sequence which validates  the results of  section  4. The simulations of section 5 is based on the algorithm of section 2.
\section{Simplifying the model}

\subsection{ Connection with the Ising model on the Line graph of the complete graph}
\

It so  happens that computations with $\{-1,1\} $ is much easier than with $\{0,1\}$, and so the symmetric $\{-1,1\}$ valued matrix $y=((y_{ij}))_{n\times n}$ is introduced as follows  $$y_{ij}:=(2x_{ij}-1)\in \{ -1,1\}, i\neq j,\quad y_{ii}:=0.$$ Note that the hamiltonian up to constants is given by \[\frac{\beta_2}{4(n-1)}T(y)+(\beta_1+\beta_2) E(y)=\frac{\theta_2}{n-1} T(y)+\theta_1 E(y),\]  where  $$\theta_2:=\frac{1}{4}\beta_2,\quad \theta_1:=\frac{1}{2}(\beta_1+\beta_2)$$ is a reparametrization, and $T(y),E(y)$ are given by the same formula with $x$ replaced by $y$, i.e. 
$$T(y)=\sum_i\sum_{j<l,j,l\neq i}y_{ij}y_{ij},\quad E(y)=\sum_{i<j}y_{ij}.$$ Thus $\P_{n,\beta}$ induces a probability on  $\{-1,1\}^{n(n-1)/2}$ which  is an  Ising model of statistical physics. The underlying graph of the Ising model is the graph $\mathbb{L}_n$ with edge set $\mathcal{E}:=\{(i,j):1\leq i<j\leq n\}$ as its vertex set, where two distinct vertices $e=(i,j)$ and $f=(k,l)$ are connected iff $\{(i,j)\cap (k,l)\}\neq \phi$, i.e. $i=k$ or $i=l$ or $j=k$ or $j=l$. It is easy to see that $\mathbb{L}_n$ is isomorphic to the line graph of the complete graph $K_n$. Also this Ising model is Ferro magnetic in terms of statistical physics terminology, as $\theta_2>0$.
\\



\subsection{Introducing the auxiliary variables and changing to a multivariate density on $\mathbb{R}^n$ }

This subsection introduces auxiliary variables $\phi=(\phi_1,\cdots,\phi_n)$ which gives a nice representation of the probability $\mathbb{P}_{n,\beta}$, along with a non local Markov chain which can be used to simulate  from the model.  The idea is motivated from a paper on the edge two star model by  J.Park  and M.E.J.Newmann \cite{PN}. 
 \\
 
  Setting $k_i:=\sum\limits_{j\neq i}y_{ij}$ it is easy to check that  
  \[T(y):=\frac{1}{2}[\sum\limits_ik_i^2-n(n-1)],\quad E(y):=\frac{1}{2}\sum\limits_{i=1}^nk_i,\] and so ignoring constants,  \[ \mathbb{P}_{n,\beta}(y)\propto e^{\frac{\theta_2}{2(n-1)}\sum\limits_{i=1}^n k_i^2+\frac{\theta_1}{2}\sum\limits_{i=1}^nk_i}.\]
Consider auxiliary random variables $\phi_i,1\leq i\leq n$ introduced by the following definition: 

Given $y$, let $\{\phi_i\}_{i=1}^n$ be  mutually independent and \[\phi_i{\sim} N\Big(\frac{k_i}{n-1},\frac{1}{(n-1)\theta_2}\Big).\] Thus the conditional density of $(\phi|y)$ is proportional to 
\[e^{-\frac{(n-1)\theta_2}{2}\sum\limits_{i=1}^n(\phi_i-\frac{k_i}{n-1})^2}, \]
and so  the joint likelihood of $(y,\phi)$ is proportional to \[e^{-\frac{(n-1)\theta_2}{2}\sum\limits_{i=1}^n\phi_i^2+\sum\limits_{i=1}^n(\theta_2\phi_i+\frac{\theta_1}{2})k_i}=e^{-\frac{(n-1)\theta_2}{2}\sum\limits_{i=1}^n\phi_i^2+\sum\limits_{i<j}[\theta_2(\phi_i+\phi_j)+\theta_1]y_{ij}}.\] This implies that conditional on $\phi$, the $y_{ij}$'s are mutually independent, and have the distribution \[\mathbb{P}_{n,\beta}(y_{ij}=1|\phi)=\frac{e^{\theta_2(\phi_i+\phi_j)+\theta_1}}{e^{\theta_2(\phi_i+\phi_j)+\theta_1}+e^{-\theta_2(\phi_i+\phi_j)-\theta_1}},\] where by a   slight abuse of notation,   $\mathbb{P}_{n,\beta}$ also denotes the joint law of $(y,\phi)$.
 \begin{remark}
The above construction is equivalent to the following representation:
\begin{align}\label{domm2}
\phi_i:=\frac{k_i}{n-1}+\frac{Z_i}{\sqrt{(n-1)\theta_2}}, Z_i\stackrel{i.i.d.}{\sim}N(0,1) \text{ independent  of }y
\end{align}
 \end{remark}
 \begin{defn}\label{d0}
Denote the marginal distribution  of $\phi$ under $\mathbb{P}_{n,\beta}$ by $\mathbb{F}_n$. $\mathbb{F}_n$ is absolutely  continuous with respect to Lebesgue measure on $\R^n$ and has the following  un-normalized density \[f_n(\phi):=e^{-\frac{(n-1)\theta_2}{2}\sum\limits_{i=1}^n\phi_i^2+\sum\limits_{i<j}\log\cosh[\theta_2(\phi_i+\phi_j)+\theta_1]}=e^{-\sum\limits_{i<j}p(\phi_i,\phi_j)},\quad p(x,y):=\frac{\theta_2}{2}(x^2+y^2)-\log\cosh(\theta_2(x+y)+\theta_1).\]
\end{defn}
Thus one can infer results about the distribution of $\phi$ from $f_n(.)$, and transfer them to conclusions about $y$ via (\ref{domm2}). This program will be carried out in section 4  to rigorously study concentration of degrees in the two star model.
\\

The above representation is summarized in the following theorem.
\begin{thm}\label{global}
\begin{enumerate}
\item[(a)] 
The law of $x$ under $\mathbb{P}_{n,\beta}$ has the following mixture representation:

Let $\phi\sim \mathbb{F}_n$, and given $\phi$, let $\{x_{ij}\}_{1\leq i<j\leq n}\in \{0, 1\}$ be mutually independent Bernoulli variables with parameter \[\frac{e^{\theta_2(\phi_i+\phi_j)+\theta_1}}{e^{\theta_2(\phi_i+\phi_j)+\theta_1}+e^{-\theta_2(\phi_i+\phi_j)-\theta_1}}=\frac{e^{2\theta_2(\phi_i+\phi_j)+2\theta_1}}{e^{2\theta_2(\phi_i+\phi_j)+2\theta_1}+1}\] Then $x$ has the same law as under $\mathbb{P}_{n,\beta}$. The conditional model $(x|\phi)$ is given by 
\[\mathbb{P}_{n,\beta}(x|\phi)=\frac{e^{\sum\limits_{i=1}^nd_i(x)(2\theta_2\phi_i+\theta_1)}}{\prod\limits_{1\leq i<j\leq n}\{e^{2\theta_2(\phi_i+\phi_j)+2\theta_1}+1\}}.\] 

\item[(b)]
Consider the following Gibb's Sampler:
\begin{itemize}
\item
Given $y$, let $\{\phi_i\}_{i=1}^n$ be mutually independent, with \[\phi_i\sim N\Big(\frac{k_i}{n-1},\frac{1}{\theta_2(n-1)}\Big),\quad k_i=\sum\limits_{j\neq i}y_{ij}.\]

\item
Given $\phi$, let $\{y_{ij}\}_{1\leq i<j\leq n}$ be mutually independent and taking values in $\{-1,1\}$ with \[\mathbb{P}(y_{ij}=1|\phi)=\frac{e^{\theta_2(\phi_i+\phi_j)+\theta_1}}{e^{\theta_2(\phi_i+\phi_j)+\theta_1}+e^{-\theta_2(\phi_i+\phi_j)-\theta_1}}.\]
\end{itemize}
\end{enumerate}
 Then  the distribution of $y$ after $l$ iterations of the Gibbs sampler  converge to the law of $y$ under $\mathbb{P}_{n,\beta}$ in total variation as $l\rightarrow\infty$.
\end{thm}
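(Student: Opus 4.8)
The plan is to establish (a) by a direct marginalization of the joint law of $(y,\phi)$ already exhibited above, and then to deduce (b) from a strict-positivity argument for the transition kernel induced on the finite space of the $y$'s.

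For part (a), I would start from the joint un-normalized density of $(y,\phi)$ displayed before Definition \ref{d0}, namely the one proportional to $\exp\{-\frac{(n-1)\theta_2}{2}\sum_i\phi_i^2+\sum_{i<j}[\theta_2(\phi_i+\phi_j)+\theta_1]y_{ij}\}$. Summing this over all $y\in\{-1,1\}^{n(n-1)/2}$ produces exactly $f_n(\phi)$, since each edge contributes a factor $2\cosh(\theta_2(\phi_i+\phi_j)+\theta_1)$ whose logarithm yields the $\log\cosh$ term; hence the $\phi$-marginal is indeed $\mathbb{F}_n$. Conversely, for fixed $\phi$ the density factorizes over the edges, which gives the stated conditional independence and the Bernoulli parameter. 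Rewriting the conditional law in the variables $x_{ij}=(y_{ij}+1)/2$ and collecting the edge factors, the exponent $\sum_{i<j}(2\theta_2(\phi_i+\phi_j)+2\theta_1)x_{ij}$ regroups, using $\sum_{i<j}(\phi_i+\phi_j)x_{ij}=\sum_i\phi_i d_i(x)$ and $2\sum_{i<j}x_{ij}=\sum_i d_i(x)$, into $\sum_i d_i(x)(2\theta_2\phi_i+\theta_1)$, which is the claimed formula for $\mathbb{P}_{n,\beta}(x\mid\phi)$. Since the $\phi$-marginal is $\mathbb{F}_n$ and the conditional of $x$ given $\phi$ is as displayed, drawing $\phi\sim\mathbb{F}_n$ and then $x\mid\phi$ reproduces the marginal of $x$, which is $\mathbb{P}_{n,\beta}$ by construction.

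For part (b), the key observation is that the two conditional updates are precisely the conditionals of the joint law of $(y,\phi)$ from part (a), so a full sweep of the Gibbs sampler leaves that joint law invariant; in particular the $y$-marginal $\mathbb{P}_{n,\beta}$ is stationary for the chain obtained by recording $y$ after each sweep. This recorded chain lives on the finite set $\{-1,1\}^{n(n-1)/2}$, so it suffices to show that its one-step transition kernel is strictly positive, which forces irreducibility and aperiodicity and hence convergence in total variation to the unique stationary law. I would verify positivity directly: for any current $y$ the auxiliary vector $\phi$ has a product-Gaussian density that is positive on all of $\mathbb{R}^n$, and for every $\phi\in\mathbb{R}^n$ each edge probability $\mathbb{P}(y'_{ij}=1\mid\phi)$ lies strictly in $(0,1)$ because $\cosh$ is finite and positive; hence for any target $y'$ the transition probability $\int \mathbb{P}(y'\mid\phi)\,p(\phi\mid y)\,d\phi$ is strictly positive.

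The computations in (a) are entirely routine, being Bayes' rule together with the regrouping of the degree sum. The only place requiring a little care is part (b): one must pass from the mixed continuous--discrete sampler to a genuinely finite-state chain by integrating out $\phi$, and then invoke the classical convergence theorem for primitive finite Markov chains. I expect no serious obstacle beyond this bookkeeping, since the full support of the Gaussian and the boundedness of $\log\cosh$ make the required positivity transparent.
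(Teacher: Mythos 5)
Your proposal is correct and follows essentially the same route as the paper: part (a) is the marginalization and $x_{ij}\leftrightarrow y_{ij}$ bijection argument already carried out in Section 2.2, and part (b) is the standard stationarity-plus-irreducibility/aperiodicity argument for the finite-state $y$-chain that the paper invokes in a single line. Your write-up in fact supplies the detail the paper leaves implicit, namely the strict positivity of the integrated one-step transition kernel, which is what makes the chain irreducible and aperiodic.
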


\begin{proof}

The proof of (a) follows from noting that $x_{ij}\leftrightarrow y_{ij}$ is a $1-1$ map. For the proof of (b), note that the Markov chain is irreducible aperiodic positive recurrent  with  $\mathbb{P}_{n,\beta}$ as its stationary distribution.
\end{proof}
\begin{remark}
The conditional distribution of $(x|\phi)$ is  also an exponential probability measure on the space of all simple graphs,  with the degree distribution as its sufficient statistics.This model is known as the $\beta$-model in statistical literature, and has been studied  in  \cite{CDS}, \cite{PN2}, and \cite{BD} among others.
\\
Part (b) of Theorem \ref{global} gives a way to simulate from the model $\mathbb{P}_{n,\beta}$.  The rates of convergence of this Markov chain has not been analyzed in this paper.
\end{remark}

\section{Identification of Phase transition boundary}

This section minimizes the  function $p(.,.)$ of definition \ref{d0} in a heuristic attempt to identify the phase transition boundary for the two star model.  The phase transition boundary for this model turns out to be the same as that of the Curie Weiss model of statistical physics.
\\

From the form of density of $\phi$ under $\mathbb{F}_n$ it follows that $\phi$ has more mass in areas where $p(.,.)$ is small, and so it makes sense to minimize $p(.,.)$ to identify the steady states of the model. Note that 
\begin{align}\label{domm}
p(x,y)=q(x+y)+\frac{\theta_2}{4}(x-y)^2,\quad q(t):=\frac{\theta_2}{4}t^2-\log \cosh(\theta_2t+\theta_1).
\end{align} The next lemma shows that the points of minima of $q$ determine the points of minima of $p(.,.)$.
\begin{lem}\label{min}

Let $U$ be one of the three open intervals $\{(0,\infty),(-\infty,0),\R\}$, and suppose there exists  a unique $\phi_0\in U$  such that  $q(t)$ has a global minima on $U$ at $t=2\phi_0$ with $q''(2\phi_0)>0$. Then there exists positive constants $\lambda_1>\lambda_2$ (depending on $\theta_1,\theta_2$) such that for all $x,y\in U$,

\begin{align}
&\label{e4}p(\phi_0,\phi_0)+\frac{\lambda_2}{2}[(x-\phi_0)^2+(y-\phi_0)^2]\leq p(x,y)\leq p(\phi_0,\phi_0)+\frac{\lambda_1}{2}[(x-\phi_0)^2+(y-\phi_0)^2].
\end{align}

\end{lem}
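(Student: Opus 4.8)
The plan is to reduce the two-variable estimate to a one-dimensional statement about $q$ by passing to the coordinates $s:=x+y$ and $d:=x-y$. Using the identity $(x-\phi_0)^2+(y-\phi_0)^2=\tfrac12\big[(s-2\phi_0)^2+d^2\big]$ together with the decomposition $p(x,y)=q(s)+\tfrac{\theta_2}{4}d^2$ from (\ref{domm}) and $p(\phi_0,\phi_0)=q(2\phi_0)$, the claimed inequalities become equivalent to controlling $q(s)-q(2\phi_0)$ by a multiple of $(s-2\phi_0)^2$, since the $d$-dependence is already exactly quadratic. As $x,y$ range over $U$, the sum $s$ ranges over $U$ as well (because $U+U=U$ for each of the three intervals), so $2\phi_0$ is an interior global minimizer of $q$ on the range of $s$, whence $q'(2\phi_0)=0$. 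Thus it suffices to produce constants $0<\mu\le M<\infty$ with
\begin{align*}
\mu\,(s-2\phi_0)^2 \;\le\; q(s)-q(2\phi_0)\;\le\; M\,(s-2\phi_0)^2 \qquad\text{for all } s\in U.
\end{align*}

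For the upper bound I would use the explicit second derivative $q''(s)=\tfrac{\theta_2}{2}-\theta_2^2\,\mathrm{sech}^2(\theta_2 s+\theta_1)$, which satisfies $q''(s)\le\tfrac{\theta_2}{2}$ for every $s$. Combining this with $q'(2\phi_0)=0$ in the integral form of Taylor's theorem, $q(s)-q(2\phi_0)=\int_{2\phi_0}^{s}(s-u)q''(u)\,du$, and noting that $(s-u)$ keeps a constant sign over the interval of integration, gives $q(s)-q(2\phi_0)\le\tfrac{\theta_2}{4}(s-2\phi_0)^2$ on all of $U$. Hence $M=\tfrac{\theta_2}{4}$ works, and translating back this yields the right-hand inequality of (\ref{e4}) with $\lambda_1=\theta_2$.

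The lower bound is the main obstacle, because $q$ is not globally convex: $q''$ dips below $0$ wherever $\mathrm{sech}^2(\theta_2 s+\theta_1)$ is close to $1$, so a naive Taylor bound fails. Instead I would study the ratio $R(s):=\frac{q(s)-q(2\phi_0)}{(s-2\phi_0)^2}$ on $U\setminus\{2\phi_0\}$ and show $\mu:=\inf_s R(s)>0$. The function $R$ is continuous and strictly positive there, as $2\phi_0$ is the \emph{unique} global minimizer on $U$, and it extends continuously to the relevant limit points: near $2\phi_0$ a second-order Taylor expansion gives $R(s)\to\tfrac12 q''(2\phi_0)>0$, while at an infinite endpoint of $U$ the estimate $\log\cosh(\theta_2 s+\theta_1)=\theta_2|s|+O(1)$ forces $q(s)\sim\tfrac{\theta_2}{4}s^2$, so $R(s)\to\tfrac{\theta_2}{4}>0$. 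The only remaining place where $R$ could degenerate is a finite endpoint of $U$ (the point $0$, when $U$ is a half-line), where $R(0^+)=\frac{q(0)-q(2\phi_0)}{4\phi_0^2}$; this is positive precisely when $q(0)>q(2\phi_0)$, a strict inequality that I would verify holds in the regime at hand (for instance it follows from the evenness of $q$ when $\theta_1=0$, the case in which a half-line $U$ arises). Since $R$ then extends to a continuous, everywhere-positive function on the compactified closure of $U$, it attains a positive minimum $\mu>0$, which gives the left-hand inequality with $\lambda_2=4\mu$.

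Finally I would record that $\lambda_1>\lambda_2$. Since $\mu\le R(2\phi_0)=\tfrac12 q''(2\phi_0)$ and $q''(2\phi_0)=\tfrac{\theta_2}{2}-\theta_2^2\,\mathrm{sech}^2(2\theta_2\phi_0+\theta_1)<\tfrac{\theta_2}{2}$ (because $\mathrm{sech}^2>0$ everywhere), we obtain $\lambda_2=4\mu<\theta_2=\lambda_1$. Combining the two one-dimensional bounds with the exact $\tfrac{\theta_2}{4}d^2$ term through the coordinate identity then reproduces (\ref{e4}), the only genuinely nontrivial input being the positivity of $\mu$, i.e.\ the uniform quadratic lower bound on $q$ coming from its unique interior global minimum and its quadratic growth at the ends of $U$.
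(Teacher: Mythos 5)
Your proof is correct and follows essentially the same route as the paper: both reduce to one dimension through the coordinates $s=x+y$, $d=x-y$ via the decomposition (\ref{domm}), and both obtain the lower bound from positivity and continuity of the ratio $R(s)=\frac{q(s)-q(2\phi_0)}{(s-2\phi_0)^2}$ together with its strictly positive limit at infinity (the paper's $r$ is exactly $2R$). Two points of comparison are worth recording. First, your upper bound via the integral form of Taylor's theorem and $q''\le \theta_2/2$ yields the explicit constant $\lambda_1=\theta_2$, whereas the paper only asserts that ``existence of $\lambda_1$ follows by a similar argument''; your version is cleaner and more informative. Second, and more substantively, you are more careful than the paper at the finite endpoint of a half-line $U$: the paper's proof rules out degeneration of $\inf_U r$ only through the limit $|t|\to\infty$, silently ignoring that for $U=(0,\infty)$ the infimum could also be approached as $s\to 0^+$; you correctly identify that excluding this requires the strict inequality $q(0)>q(2\phi_0)$. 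Your parenthetical justification of that inequality by ``evenness of $q$ when $\theta_1=0$'' is not sufficient by itself --- evenness alone does not prevent $q(0)=q(2m)$, in which case the conclusion of the lemma would actually fail --- but the fix is one line: in the only regime where a half-line $U$ arises (domain $\Theta_2$), $q'$ vanishes on $[0,\infty)$ only at $0$ and $2m$, and $q''(0)<0$, so $q'<0$ throughout $(0,2m)$ and hence $q(0)>q(2m)$ strictly. With that substitution your argument is complete, and it in fact patches a small lacuna in the paper's own proof.
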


\begin{proof}
Define a function $r(t)$ on $U$ by \[r(t):=\frac{2(q(t)-q(2\phi_0))}{(t-2\phi_0)^2},t\neq 2\phi_0;\quad  r(2\phi_0)=q''(2\phi_0).\] Then by definition $r$ is continuous on $\mathbb{R}$. Since $\lim\limits_{|t|\rightarrow\infty}r(t)=\frac{\theta_2}{2}>0$, and  $r(t)>0$ for all $t$, it follows that  \[\lambda_2':=\inf_{t\in U} r(t)>0,\] which readily gives \[q(t)\ge q(2\phi_0)+\frac{\lambda_2'}{2}(t-2\phi_0)^2.\]Using (\ref{domm}) with $t=x+y$ this gives    
\begin{align*}
p(x,y)\geq p(\phi_0,\phi_0)+\frac{\theta_2}{4}(x-y)^2+\frac{\lambda_2'}{2}(x+y-2\phi_0)^2.
\end{align*}
Thus setting $\lambda_2=\min(\lambda_2',\frac{\theta_2}{2})$ gives  \[p(x,y)\ge p(\phi_0,\phi_0)+\frac{\lambda_2}{2}[(x-\phi_0)^2+(y-\phi_0)^2].\] Existence of $\lambda_1$ follows by a similar argument.
\end{proof}

\begin{remark}
Lemma \ref{min} readily gives that $p(.,.)$ has a global minima at $(\phi_0,\phi_0)$ on $U\times U$. The stronger conclusion of existence of $\lambda_1,\lambda_2$ will be used in section 4 to deduce some properties of the distribution $\mathbb{F}_n$.
\end{remark}
Lemma \ref{min} thus reduces the problem of minimization of $p(.,.)$ over $\R^2$ to a problem of minimization of $q(.)$ over $\R$. The later task is now carried out via sub-cases.


\begin{itemize}
\item{ $\Theta_{11}:\{\theta_1=0,\theta_2<1/2\}$}
\\

In this case $q''(t)>0$ and so $q(.)$ is strictly convex with a unique global minima at $0$. Thus the conditions of Lemma \ref{min} is satisfied with $\phi_0=0, U=\R$.
\\

\item{$\Theta_2:\{\theta_1=0,\theta_2>1/2\}$}
\\

Since $q(t)$ goes to $\infty$ as $|t|\rightarrow\infty$ the global minima is attained at a finite point.
Differentiating $q$ gives $q'(t)=\frac{\theta_2}{2}[t-2\tanh(\theta_2 t)]$ which has  exactly three real roots $0,\pm 2m$ where $m$ is a positive root of $t=\tanh(2\theta_2t)$. Also note that $q''(0)<0$, whereas $q''(\pm 2m)>0$. By symmetry it follows that  $\pm 2m$ are global minima of $q(.)$, and $0$ is a local maxima. Thus the conditions of Lemma \ref{min} is satisfied with either $\phi_0=m,U=(0,\infty)$ or $\phi_0=-m,U=(-\infty,0)$.
\\

\item{ $\Theta_3:\{\theta_1=0,\theta_2=1/2\}$}
\\

In this case $q''(t)\ge 0$ with equality at $t=0$ and so the function $q$ is convex but not strictly convex. In this case $q(.)$  has a unique global minima at $0$. However the conditions of Lemma \ref{min} is not satisfied as $q''(0)=0$. \\

\item{$\Theta_{12}:\{\theta_1>0,\theta_2>0\}$}
\\

In this case $q(-t)>q(t)$, and so it suffices to minimize over $t>0$. Also since $q(t)$ goes to  $\infty$ as $t\rightarrow\infty$, the global minima is not attained at a finite point $2m\ge 0$, say. Then $m$ must satisfy $q'(2m)=0$, which simplifies to
 $m=\tanh[2\theta_2m+\theta_1]$. But the last equation  has a unique strictly positive solution on $[0,\infty)$, and so  the global minima for $q$ is at $2m$ with $m>0$, where $m>0$ is a root of $t=\tanh(2\theta_2 t+\theta_1)$. Also $q''(2m)>0$, and so
 the conditions of Lemma \ref{min} hold with $\phi_0=m, U=\R$, where $m$ is the unique positive root of $t=\tanh(2\theta_2t+\theta_1)$.
 \\
 
  \item{$\Theta_{13}:\{\theta_1<0,\theta_2>0\}$}
  \\
  
   By symmetry, 
  the conditions of Lemma \ref{min} hold with $\phi_0=m,U=\R$, where now $m$ is the unique negative root of $t=\tanh(2\theta_2t+\theta_1)$.
\\

\end{itemize}
\begin{remark}
The domain $\Theta_1:=\Theta_{11}\cup \Theta_{12}\cup\Theta_{13}$ is  the uniqueness domain, as the global minimization of $q(.)$ occurs at a unique point. This is also known as the high temperature regime in statistical physics.
\\

The domain $\Theta_2$ is the non-uniqueness domain, as the minima is attained at two distinct points. This domain is known as the low temperature regime in statistical physics.
\\

The domain $\Theta_3$ is the critical point parameter configuration, as the function $q(.)$ changes its behavior at this point. 
\end{remark}
\begin{remark}
The assertions about the roots of the equation $t=\tanh(2\theta_2t+\theta_1)$ can be checked directly, or can be verified  from (\cite[Page 9]{DM}). It also follows from \cite{DM} that the phase transition boundary of the Curie-Weiss model is the same (the transition for Curie Weiss model is at $\theta_2=1$ instead of $1/2$, but this is due to the scaling chosen for the two star model). 
\end{remark}

\section{Statement and proofs of main results}
In order to state the results of this section, the following definition is introduced.
\begin{defn}
Let $\{a_n\}_{n\ge 1},\{b_n\}_{n\ge 1}$ be two sequences of positive real numbers. The notation $a_n=\Omega(b_n)$ means there exists a constant $C>0$ free of $n$ such that $a_n\ge C b_n$.
\end{defn}
The main results of this section are the two following theorems:
\begin{thm}\label{ab}
If $\theta\in \Theta_1$ then there exists unique $p_0\in (0,1)$ for which 
$$\P_{n,\beta}\Big(\max_{1\le i\le n}\Big|\frac{d_i}{n-1}-p_0\Big|>\delta\Big)\le e^{-\Omega(n)}.$$
\end{thm}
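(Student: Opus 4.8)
The plan is to identify the concentration value as $p_0=\tfrac12(1+\phi_0)$, where $\phi_0$ is the unique minimizer produced in Section~3 (recall that on $\Theta_1$ the set $U$ is all of $\mathbb R$ in each subcase, and $\phi_0=\tanh(2\theta_2\phi_0+\theta_1)$), and to reduce the statement about degrees to a concentration statement for the auxiliary field $\phi$ under $\mathbb F_n$. Writing $k_i=\sum_{j\neq i}y_{ij}=2d_i-(n-1)$, one has $\frac{d_i}{n-1}=\frac12\big(1+\frac{k_i}{n-1}\big)$ and $p_0=\frac12(1+\phi_0)$, so it suffices to control $\max_i\big|\frac{k_i}{n-1}-\phi_0\big|$. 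By the representation (\ref{domm2}), $\frac{k_i}{n-1}=\phi_i-\frac{Z_i}{\sqrt{(n-1)\theta_2}}$ with $Z_i$ i.i.d.\ $N(0,1)$; a union bound with the Gaussian tail gives $\p\big(\max_i|Z_i|>\eta\sqrt n\big)\le e^{-\Omega(n)}$, so after absorbing this term into a redefinition of $\delta$ the whole problem collapses to showing $\mathbb F_n\big(\max_i|\phi_i-\phi_0|>\delta\big)\le e^{-\Omega(n)}$, and by exchangeability and a union bound this reduces further to $\mathbb F_n(|\phi_1-\phi_0|>\delta)\le e^{-\Omega(n)}$.

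The one tool I would use throughout is the global two–sided quadratic bound of Lemma~\ref{min}, valid for all $x,y\in\mathbb R$. Summing its lower bound over all pairs and using $\sum_{i<j}[(\phi_i-\phi_0)^2+(\phi_j-\phi_0)^2]=(n-1)\sum_i(\phi_i-\phi_0)^2$ shows that $f_n$ is dominated by a product Gaussian of per–coordinate variance $\asymp 1/(\lambda_2 n)$ centered at $\phi_0\mathbf{1}$, while the matching $\lambda_1$–upper bound yields a comparable lower bound on the normalizing constant. Since the energies here scale like $n^2$, this crude sandwich already delivers the \emph{bulk} estimate
$$\mathbb F_n\Big(\sum_{i=1}^n(\phi_i-\phi_0)^2>\epsilon n\Big)\le e^{-\Omega(n^2)}$$
for every fixed $\epsilon>0$: a chi–square large deviation of order $n^2$ easily swallows the $(\lambda_1/\lambda_2)^{n/2}$ mismatch between the wide and narrow comparison Gaussians.

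The per–coordinate estimate is then obtained by conditioning. Given $\phi_{-1}=(\phi_2,\dots,\phi_n)$, the conditional density of $\phi_1$ is proportional to $e^{-\sum_{j\ge 2}p(\phi_1,\phi_j)}$, and applying Lemma~\ref{min} term by term sandwiches this conditional energy between $C'+\frac{\lambda_2(n-1)}{2}(\phi_1-\phi_0)^2$ and $C''+\frac{\lambda_1(n-1)}{2}(\phi_1-\phi_0)^2$, where $C''-C'=\frac{\lambda_1-\lambda_2}{2}\sum_{j\ge 2}(\phi_j-\phi_0)^2$. Bounding the conditional tail by the ratio of the associated Gaussian integrals, the excess factor $e^{C''-C'}$ is the only obstruction; on the bulk event $\{\sum_{j\ge 2}(\phi_j-\phi_0)^2\le\epsilon n\}$ it is at most $e^{(\lambda_1-\lambda_2)\epsilon n/2}$, whereas the Gaussian tail contributes $e^{-\lambda_2(n-1)\delta^2/2}$. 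Choosing $\epsilon=\epsilon(\delta)$ so small that $(\lambda_1-\lambda_2)\epsilon<\lambda_2\delta^2$ forces the exponent negative, giving
$$\mathbb F_n\big(|\phi_1-\phi_0|>\delta\,\big|\,\phi_{-1}\big)\le e^{-\Omega(n)}\quad\text{on the bulk event.}$$
Combining with the bulk estimate, $\mathbb F_n(|\phi_1-\phi_0|>\delta)\le e^{-\Omega(n^2)}+e^{-\Omega(n)}=e^{-\Omega(n)}$, and the union bound together with the first paragraph completes the proof.

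The crux of the argument is the interplay of two scales. The naive per–coordinate Gaussian sandwich fails for small $\delta$, because the discrepancy $\lambda_1>\lambda_2$ in Lemma~\ref{min} introduces a blow–up factor $(\lambda_1/\lambda_2)^{n/2}$ that the tail $e^{-\lambda_2 n\delta^2/2}$ cannot beat when $\delta$ is below a threshold. The resolution I would emphasize is that one must \emph{first} establish bulk concentration, which holds at the faster speed $n^2$ and therefore costs essentially nothing, and then exploit it: the bulk smallness $\epsilon$ can be taken as small as one likes, which is exactly the slack needed to absorb the $e^{(\lambda_1-\lambda_2)\epsilon n/2}$ mismatch in the conditional step. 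I expect the bookkeeping in the conditional sandwich (the precise form of $C',C''$ and the verification that the bulk event really supplies the required $\epsilon$) to be the only genuinely delicate point; everything else is a routine Gaussian/chi–square tail computation made legitimate by the fact that $U=\mathbb R$ on all of $\Theta_1$, so that Lemma~\ref{min} applies globally.
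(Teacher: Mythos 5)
Your proposal is correct and follows essentially the same route as the paper: reduce degree concentration to concentration of the auxiliary field via (\ref{domm2}), then establish a bulk estimate on $\sum_i(\phi_i-\phi_0)^2$ followed by a conditional single-coordinate estimate, both obtained from the quadratic sandwich of Lemma \ref{min} applied to $f_n$ and to the conditional density of $\phi_1$ given the rest --- this is precisely the content and proof of the paper's Lemma \ref{exp}. The only (immaterial) difference is that the paper takes the bulk event at a fixed constant level $M$ with rate $e^{-\Omega(n)}$, so the excess factor $e^{(\lambda_1-\lambda_2)M/2}$ in the conditional step is a constant and no tuning of $\epsilon$ against $\delta$ is needed, whereas your $\epsilon n$-level bulk event (valid, at rate $e^{-\Omega(n^2)}$) forces the choice $(\lambda_1-\lambda_2)\epsilon<\lambda_2\delta^2$.
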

\begin{thm}\label{ba}
If $\theta\in \Theta_2$ then there exists distinct $p_1,p_2\in (0,1)$ for which 
$$\P_{n,\beta}\Big(\max_{1\le i\le n}\Big|\frac{d_i}{n-1}-p_j\Big|>\delta\Big)-\frac{1}{2}|\le e^{-\Omega(n)}.$$
\end{thm}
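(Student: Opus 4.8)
The plan is to transfer the statement about degrees to a statement about the auxiliary field $\phi$ under $\mathbb{F}_n$, and then to show that $\mathbb{F}_n$ splits its mass \emph{equally} between two sharply concentrated wells. Let $m>0$ be the positive root of $t=\tanh(2\theta_2 t)$, so that in $\Theta_2$ the function $q$ has global minima at $\pm 2m$ and, by Lemma \ref{min}, $p(\cdot,\cdot)$ has global minima at $(m,m)$ on $(0,\infty)^2$ and at $(-m,-m)$ on $(-\infty,0)^2$. Since $y_{ij}=2x_{ij}-1$ gives $k_i=2d_i-(n-1)$, i.e. $\tfrac{d_i}{n-1}=\tfrac12\big(1+\tfrac{k_i}{n-1}\big)$, I set $p_1:=\tfrac{1+m}{2}\in(\tfrac12,1)$ and $p_2:=\tfrac{1-m}{2}\in(0,\tfrac12)$, which are distinct in $(0,1)$ as $m\in(0,1)$. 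The representation \eqref{domm2} gives the pointwise identity $\tfrac{k_i}{n-1}=\phi_i-\tfrac{Z_i}{\sqrt{(n-1)\theta_2}}$, and since $\mathbb{P}\big(\max_i|Z_i|>\tfrac{\delta}{2}\sqrt{(n-1)\theta_2}\big)\le 2n\,e^{-\delta^2(n-1)\theta_2/8}=e^{-\Omega(n)}$, the scaled degrees track the field uniformly: outside an $e^{-\Omega(n)}$ event one has $\max_i\big|\tfrac{k_i}{n-1}-\phi_i\big|\le\tfrac\delta2$. Thus everything reduces to the behaviour of $\max_i|\phi_i\mp m|$ under $\mathbb{F}_n$.

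The factor $\tfrac12$ will come from symmetry. Because $\theta_1=0$ the kernel $p(x,y)$ is even, so $f_n(-\phi)=f_n(\phi)$ and $\mathbb{F}_n$ is invariant under $\phi\mapsto-\phi$; as $\{\sum_i\phi_i=0\}$ is Lebesgue-null, $\mathbb{F}_n(\sum_i\phi_i>0)=\mathbb{F}_n(\sum_i\phi_i<0)=\tfrac12$ exactly. Writing $A_\pm:=\{\max_i|\phi_i\mp m|\le\epsilon\}$ and $A_0:=(A_+\cup A_-)^c$, for $\epsilon<m$ these are disjoint and $A_+\subset\{\sum\phi_i>0\}$, $A_-\subset\{\sum\phi_i<0\}$, so the central concentration claim $\mathbb{F}_n(A_0)\le e^{-\Omega(n)}$ forces $\mathbb{F}_n(A_+)=\mathbb{F}_n(A_-)=\tfrac12+O(e^{-\Omega(n)})$. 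Fixing $\epsilon<\tfrac{3\delta}{2}$, on $A_+$ with small noise one gets $\max_i|\tfrac{k_i}{n-1}-m|\le\epsilon+\tfrac\delta2<2\delta$, whereas on $A_-$ with small noise \emph{every} coordinate satisfies $|\tfrac{k_i}{n-1}-m|\ge 2m-\epsilon-\tfrac\delta2>2\delta$. Since $\{\max_i|\tfrac{d_i}{n-1}-p_1|>\delta\}=\{\max_i|\tfrac{k_i}{n-1}-m|>2\delta\}$, splitting this event over $A_+,A_-,A_0$ and the noise event gives $\mathbb{P}_{n,\beta}(\max_i|\tfrac{d_i}{n-1}-p_1|>\delta)=\mathbb{F}_n(A_-)+O(e^{-\Omega(n)})=\tfrac12+O(e^{-\Omega(n)})$; the case of $p_2$ is identical with $A_+,A_-$ interchanged.

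To prove the concentration claim I would decompose $\mathbb{R}^n$ into orthants. On $(0,\infty)^n$, Lemma \ref{min} with $U=(0,\infty)$, $\phi_0=m$ sandwiches $\sum_{i<j}p(\phi_i,\phi_j)$ between $\binom n2 p(m,m)+\tfrac{\lambda_2(n-1)}{2}\sum_i(\phi_i-m)^2$ and the same with $\lambda_1$, so $f_n$ is comparable on this orthant to the product Gaussian density with mean $m$ and per-coordinate variance of order $1/n$; a union bound over the $n$ coordinates yields $\mathbb{F}_n\big((0,\infty)^n\cap\{\max_i|\phi_i-m|>\epsilon\}\big)\le n\,C\,e^{-\lambda_2(n-1)\epsilon^2/2}$ relative to the orthant mass, and symmetrically on $(-\infty,0)^n$. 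Integrating the upper sandwich over a box around $(m,\dots,m)$ simultaneously gives the lower bound $Z_n\ge e^{-\binom n2 p(m,m)+O(n)-\frac n2\log n}$, against which all other contributions are to be measured.

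The hard part, and the main obstacle, will be showing that the mixed-sign orthants together carry only $e^{-\Omega(n)}$ mass, where Lemma \ref{min} does not apply directly. The mechanism is a surface tension: using \eqref{domm}, equality $p(x,y)=p(m,m)$ with $x\ge0\ge y$ would force both $x+y=\pm2m$ and $x=y$, which is impossible across the axes, so $c_0:=\inf_{x\ge0\ge y}p(x,y)-p(m,m)>0$. Hence an orthant with $a$ positive and $n-a$ negative coordinates has each of its $a(n-a)$ cross pairs costing at least $c_0$ above $p(m,m)$, giving an energy excess of at least $c_0\,a(n-a)\ge c_0(n-1)$; summed against the $\binom na$ orthants of each type the exponential barrier wins, $\sum_{a=1}^{n-1}\binom na e^{-c_0 a(n-a)}=e^{-\Omega(n)}$, since for $a\le n/2$ the gap $\tfrac{c_0}{2}an$ dominates the entropy $\log\binom na\le a\log n$. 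The delicate points, which I expect to absorb most of the technical work, are establishing $c_0>0$ uniformly and extracting from the cross pairs enough quadratic growth in every coordinate to make each orthant integral converge with only an $e^{O(n)}$ volume factor — in particular confining the minority coordinates, which when $a$ is small have no same-sign partners and must be controlled through the cross pairs themselves. By contrast the within-orthant concentration and the symmetry/noise bookkeeping are routine once Lemma \ref{min} and \eqref{domm2} are available, and Theorem \ref{ab} is the easier companion because in $\Theta_1$ Lemma \ref{min} applies with $U=\mathbb{R}$ and a single well, making the orthant analysis unnecessary.
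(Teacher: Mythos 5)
Your overall architecture coincides with the paper's: the factor $\tfrac12$ comes from the $\phi\mapsto-\phi$ symmetry of $f_n$ when $\theta_1=0$, the two wells are located via Lemma \ref{min} at $\pm(m,\dots,m)$, degrees are transferred to the field through (\ref{domm2}), and the crux is showing that the mixed-sign part of $\mathbb{R}^n$ carries $e^{-\Omega(n)}$ mass (this is exactly the paper's Lemma \ref{lfin}, proved in the appendix, also by a surface-tension argument). Your reductions --- the choice $p_1=(1+m)/2$, $p_2=(1-m)/2$, the noise bound, the disjointness bookkeeping --- are correct.

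The gap is in the hard step, precisely where you flagged ``an $e^{O(n)}$ volume factor.'' The quantity your orthant estimate actually produces is not $\binom{n}{a}e^{-c_0a(n-a)}$ but $\binom{n}{a}e^{Cn-\frac{c_0}{2}a(n-a)}$: the upper bound on a mixed orthant integral uses the smaller curvature constants ($\lambda_2$ for same-sign pairs, a cross-pair constant of order $\theta_2$), while the lower bound on $Z_n$ uses $\lambda_1>\lambda_2$, so the ratio of the two Gaussian normalizations leaves an uncancelled prefactor of order $(\lambda_1/\lambda_2)^{n/2}=e^{Cn}$ with $C>0$ unrelated to $c_0$. For orthants with $a=O(1)$ minority coordinates the excess $\frac{c_0}{2}a(n-a)=O(n)$ has no reason to beat $Cn$; worse, as $\theta_2\downarrow 1/2$ one has $m\to0$, hence $c_0\to0$, while $\lambda_2\to0$ and $\lambda_1$ stays bounded below, so $C\to\infty$: near criticality the claimed dominance is false, not merely unproven. (The same mismatch breaks the ``routine'' within-orthant union bound $n\,Ce^{-\lambda_2(n-1)\epsilon^2/2}$ for small $\epsilon$; this is why the paper's Lemma \ref{exp} first restricts to an $\ell^2$-ball of large radius $M$ and then conditions on $\phi_2,\dots,\phi_n$, which reduces the prefactor to $O(1)$.) The paper's appendix avoids Gaussian-versus-Gaussian comparisons across orthants altogether: it uses the pointwise, measure-preserving reflection inequality $p(x,y)\ge p(|x|,|y|)$, whose gain is uniformly positive only on $I\times(-I)$ and $I\times(-\infty,0]$ with $I=(m/2,3m/2)$, and runs a three-stage bootstrap: (i) at most $M_1$ coordinates lie outside $\pm I$, with $M_1$ chosen large enough that a cost of $M_1\Omega(n)$ beats the $2^n(\lambda_1/\lambda_2)^{n/2}$ prefactor; (ii) $I$ and $-I$ cannot each contain more than $M_2$ coordinates, since then at least $M_2(n-M_1-M_2)$ cross pairs each pay a fixed gain $C_1$; (iii) once $n-M_3$ coordinates are in $I$, even a single negative coordinate costs $(n-M_3)C_2$ against only polynomial entropy $(2n)^{M_3}$. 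Your small-$a$ orthants are exactly the configurations handled by stage (iii), and repairing your proof would require essentially this reflection machinery.
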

The two theorems will be proved via a series of lemmas.
\\
The first lemma provides a basic estimate which tells us that all the $\phi_i$'s are within a sub interval of $(-1,1)$  with high probability.
\begin{lem}\label{bound}

There exists $-1<m_1<m_2<1$ such that \begin{align}\label{e1}\mathbb{P}_{n,\beta}(\phi_i\notin [m_1,m_2]\text{ for some }i,1\le i\le n)\leq e^{-\Omega(n)}.\end{align}

\end{lem}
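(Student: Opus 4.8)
The plan is to exploit the two conditional representations of the joint law of $(y,\phi)$ established in Section 2: the Gaussian representation $\phi_i=\frac{k_i}{n-1}+\frac{Z_i}{\sqrt{(n-1)\theta_2}}$ of (\ref{domm2}) for $\phi\mid y$, and the conditional independence of the edges in Theorem \ref{global}(a) for $y\mid\phi$. The target event forces some $\phi_i$ near $\pm1$; since $\frac{k_i}{n-1}\in[-1,1]$ always and the Gaussian part is negligible, the real content of the lemma is that every scaled degree $\frac{k_i}{n-1}$ stays bounded away from $\pm1$ with overwhelming probability. I would establish this by a short bootstrap that alternates between the two representations.

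First I would record a crude a priori bound. Because $|\frac{k_i}{n-1}|\le1$ deterministically, representation (\ref{domm2}) shows that $|\phi_i|>1+\delta$ can occur only if the Gaussian increment $\frac{Z_i}{\sqrt{(n-1)\theta_2}}$ exceeds $\delta$ in absolute value. A Gaussian tail bound for each $i$ together with a union bound over the $n$ coordinates then yields $\mathbb{P}_{n,\beta}(\max_i|\phi_i|>1+\delta)\le e^{-\Omega(n)}$. Denote this $\phi$-measurable event by $B:=\{\max_i|\phi_i|\le 1+\delta\}$.

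Next, working on $B$ and conditioning on $\phi$, I would invoke Theorem \ref{global}(a). On $B$ the argument $\theta_2(\phi_i+\phi_j)+\theta_1$ is bounded in modulus by $M:=2\theta_2(1+\delta)+|\theta_1|$, so each conditional edge mean $\mathbb{E}[y_{ij}\mid\phi]=\tanh(\theta_2(\phi_i+\phi_j)+\theta_1)$ has modulus at most $\tanh(M)<1$, whence $|\mathbb{E}[k_i\mid\phi]|\le(n-1)\tanh(M)$. Since, conditionally on $\phi$, $k_i=\sum_{j\neq i}y_{ij}$ is a sum of $n-1$ independent $\{-1,1\}$-valued variables, Hoeffding's inequality gives $\mathbb{P}(|\frac{k_i}{n-1}-\frac{\mathbb{E}[k_i\mid\phi]}{n-1}|>\epsilon\mid\phi)\le 2e^{-(n-1)\epsilon^2/2}$. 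Summing over $i$ and integrating against $\mathbf{1}_B$ (legitimate because $B$ is $\phi$-measurable) shows $\mathbb{P}_{n,\beta}(\{\max_i|\frac{k_i}{n-1}|>\tanh(M)+\epsilon\}\cap B)\le e^{-\Omega(n)}$. Choosing $\epsilon,\delta>0$ small enough that $c:=\tanh(M)+\epsilon$ satisfies $c+\delta<1$, I would combine this with the bound on $B^c$ and, once more via (\ref{domm2}), with the Gaussian tail bound on $\max_i|Z_i|/\sqrt{(n-1)\theta_2}$, to conclude that $\max_i|\phi_i|\le c+\delta$ off an event of probability $e^{-\Omega(n)}$. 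Taking $m_2=-m_1=c+\delta$ then proves (\ref{e1}).

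The step I expect to be the crux is the interplay in the previous paragraph: the bound on the degrees $k_i$ requires the edge probabilities to be bounded away from $0$ and $1$, which in turn requires an a priori bound on $\phi$, while $\phi$ itself is driven by the degrees. This potential circularity is exactly what the crude bound $B$ resolves, and it is cheap precisely because $\frac{k_i}{n-1}$ is automatically confined to $[-1,1]$, so the only way $\phi$ can be large is through the exponentially light Gaussian tails. Everything else is a routine combination of Gaussian and Hoeffding concentration with a union bound over the $n$ vertices.
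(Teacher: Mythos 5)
Your proof is correct, but it takes a genuinely different route from the paper's. The paper never conditions on $\phi$ to control the degrees: it bounds the conditional log-odds of each edge given the rest of the graph, $\log\big(\p(x^{+e})/\p(x^{-e})\big)\in[\beta_1,\beta_1+2\beta_2]$, and invokes a Holley-type stochastic domination theorem (\cite[Theorem 2.3(c)]{Gr}) to sandwich $\mathbb{P}_{n,\beta}$ between two Erd\H{o}s--R\'enyi measures, $\mathbb{Q}_{n,a_1}\le \mathbb{P}_{n,\beta}\le \mathbb{Q}_{n,a_2}$ with $a_1=\frac{e^{\beta_1}}{1+e^{\beta_1}}$, $a_2=\frac{e^{2\beta_2+\beta_1}}{1+e^{2\beta_2+\beta_1}}$; binomial tails plus a union bound then confine all scaled degrees to $[a_1-\delta,a_2+\delta]$, and (\ref{domm2}) transfers this to $\phi$ exactly as you do. Your replacement for the domination step is a bootstrap entirely inside the auxiliary-variable representation: the deterministic bound $|k_i|\le n-1$ plus Gaussian tails gives the crude event $B=\{\max_i|\phi_i|\le 1+\delta\}$; on $B$ the conditional edge means $\tanh(\theta_2(\phi_i+\phi_j)+\theta_1)$ are bounded away from $\pm1$, so Hoeffding applied conditionally on $\phi$ (legitimate, as you note, because $B$ is $\phi$-measurable) pulls the scaled degrees into a strictly smaller interval, and a second pass through (\ref{domm2}) closes the loop; the only point requiring care, that one can choose $\delta,\epsilon$ with $\tanh(2\theta_2(1+\delta)+|\theta_1|)+\epsilon+\delta<1$, follows by continuity from the strict inequality $\tanh(2\theta_2+|\theta_1|)<1$, and you address it. Your argument is more self-contained --- it uses only the Section 2 structure plus standard concentration, with no external correlation-inequality machinery --- and it lands on essentially the same constants: under the reparametrization $\theta_1=\frac{1}{2}(\beta_1+\beta_2)$, $\theta_2=\frac{1}{4}\beta_2$ one has $\tanh(2\theta_2+\theta_1)=2a_2-1$, so your (symmetric, hence slightly cruder) interval matches the paper's at the upper end. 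What the paper's route buys is a stronger structural fact: stochastic domination controls every monotone event of the graph at once, not just the degree bounds, whereas your bootstrap is tailored to this particular lemma.
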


\begin{proof}

For a simple graph $x\in \mathcal{X}_n$ and a given edge $e\in \mathcal{E}$ define the simple graphs $x^{+e},x^{-e}\in \mathcal{X}_n$ as follows:
\begin{align*}
x^{+e}_f=&x^{-e}_f=x_f\text{ if }f\neq e,\\
x^{+e}_e=&1,\\
x^{-e}_e=&0,
\end{align*} i.e. $x^{+e}$ and $x^{-e}$ are basically the graph $x$ with the edge $e$ present or absent respectively, irrespective of whether it was present or absent in $x$ to begin with. 
\\

Setting \[a_1:=\frac{e^{\beta_1}}{1+e^{\beta_1}},a_2:=\frac{e^{2\beta_2+\beta_1}}{1+e^{2\beta_2+\beta_1}},\] 
note that $0<a_1<a_2<1$. Also, 
 for any $x\in \mathcal{X}_n$ 
 \begin{align*}
\log \frac{\p(x^{+e})}{\p(x^{-e})}=\frac{\beta_2}{n-1}\sum\limits_{f\in N(e)}x_f+\Big(\beta_1+\frac{\beta_2}{n-1}\Big)\leq \frac{2\beta_2(n-2)}{n-1}+\beta_1+\frac{\beta_2}{n-1}\leq \beta_1+2\beta_2=\log \frac{a_2}{1-a_2}.
\end{align*}
It follows by an application of  \cite[Theorem 2.3(c)]{Gr} that $\mathbb{P}_{n,\beta}|_x\leq \mathbb{Q}_{n,a_2}$ in the sense of stochastic ordering on graphs, where $\mathbb{Q}_{n,a}$ is an Erdos Renyi distribution on $\mathcal{X}_n$ with parameter $a$. A similar argument gives that  $\mathbb{P}_{n,\beta}|_x\geq \mathbb{Q}_{n,a_1}$, and so  for any  $\delta>0$, \[\mathbb{P}_{n,\beta}\Big( \frac{d_i(x)}{n-1}\notin[a_1-\delta,a_2+\delta]\text{ for some }i,1\le i\le n\Big)\leq e^{-\Omega(n)}.\]  Also recall that $\frac{k_i(y)+n-1}{2}=d_i(x)$, and so \[\mathbb{P}_{n,\beta}\Big(\frac{k_i}{n-1}\notin [2(p_1-\delta)-1,2(p_2+\delta)-1]\text{ for some }i,1\le i\le n\Big)\leq e^{-\Omega(n)}.\] The conclusion follows on using   (\ref{domm2}) 
and noting that  \[\mathbb{P}\Big(\frac{|Z_i|}{\sqrt{(n-1)\theta_2}}>\delta\text{ for some i},1\le i\le n\Big)\leq e^{-\Omega(n)},\] for any $\delta>0$, with $Z_i\stackrel{i.i.d.}{\sim} N(0,1)$.

\end{proof}
The second Lemma builds on Lemma \ref{bound} to develop concentration results for all the $\phi_i$'s simultaneously. These estimates will be used for the proof of Theorems \ref{ab} and \ref{ba}.
\begin{lem}\label{exp}
Suppose the conditions of Lemma \ref{min} hold.
Then
\begin{align}
&\label{e3}\mathbb{P}_{n,\beta}(\sum\limits_{i=1}^n(\phi_i-\phi_0)^2> M|\phi\in U^n)\leq e^{-\Omega(n)},\\
&\label{e4}\P_{n,\beta}(\max_{1\le i\le n}|\phi_i-\phi_0|>\delta|\phi\in U^n)\le e^{-\Omega(n)}.
\end{align}
\end{lem}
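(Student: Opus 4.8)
\emph{Proof proposal.} The plan is to exploit the pointwise quadratic sandwich of Lemma \ref{min} to compare $f_n$ on $U^n$ with a product Gaussian centred at $\phi_0$, and then to run two genuinely different arguments for the two displays. The observation common to both is that for $\phi \in U^n$ the identity $\sum_{i<j}[(\phi_i-\phi_0)^2+(\phi_j-\phi_0)^2]=(n-1)\sum_{i=1}^n(\phi_i-\phi_0)^2$ turns Lemma \ref{min} into
\[ \binom{n}{2}p(\phi_0,\phi_0)+\tfrac{\lambda_2(n-1)}{2}S(\phi)\ \le\ \sum_{i<j}p(\phi_i,\phi_j)\ \le\ \binom{n}{2}p(\phi_0,\phi_0)+\tfrac{\lambda_1(n-1)}{2}S(\phi),\qquad S(\phi):=\sum_{i=1}^n(\phi_i-\phi_0)^2, \]
so on $U^n$ the density $f_n$ is squeezed between two Gaussians of inverse variance $\lambda_2(n-1)$ and $\lambda_1(n-1)$ about $\phi_0$. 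All conditional probabilities below are ratios of integrals of $f_n$ over $U^n$, and the constant $e^{-\binom{n}{2}p(\phi_0,\phi_0)}$ cancels.

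For (\ref{e3}) I would bound the numerator $\int_{U^n\cap\{S>M\}}f_n$ by the upper Gaussian extended to all of $\mathbb{R}^n$; after rescaling $\phi_i-\phi_0\mapsto u_i/\sqrt{\lambda_2(n-1)}$ this is a chi-square tail $\mathbb{P}(\chi^2_n>\lambda_2(n-1)M)$ times $(2\pi/\lambda_2(n-1))^{n/2}$. For the denominator I would keep only the cube $(\phi_0-\varepsilon,\phi_0+\varepsilon)^n\subset U^n$ (legitimate since $\phi_0$ is interior to $U$) and use the lower Gaussian, giving at least $(c_0/\sqrt{n-1})^{n}$ for a fixed $c_0>0$ and $n$ large. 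The ratio is then at most $K^n\,\mathbb{P}(\chi^2_n>\lambda_2(n-1)M)$ with $K=\sqrt{2\pi/\lambda_2}/c_0$, and the Chernoff bound $\mathbb{P}(\chi^2_n>tn)\le e^{-\frac{n}{2}(t-1-\log t)}$ with $t\approx\lambda_2 M$ shows that choosing $M$ large enough (so that $\lambda_2M-1-\log(\lambda_2M)>2\log K$) makes this $e^{-\Omega(n)}$. This is exactly where $M$ must be a suitably large constant.

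For (\ref{e4}) the same global sandwich is too lossy: matching an upper Gaussian of curvature $\lambda_2$ against a lower Gaussian of curvature $\lambda_1>\lambda_2$ over all $n$ coordinates costs a factor $(\lambda_1/\lambda_2)^{n/2}$, which the decay $e^{-\frac{\lambda_2}{2}(n-1)\delta^2}$ cannot absorb once $\delta$ is small. This is the main obstacle, and I would get around it by localising to one coordinate, where the mismatch costs only a polynomial factor. By exchangeability and a union bound it suffices to bound $\mathbb{P}_{n,\beta}(|\phi_1-\phi_0|>\delta\mid \phi\in U^n)$ by $e^{-\Omega(n)}$. Fixing $\phi_2,\dots,\phi_n$, the conditional law of $\phi_1$ has density on $U$ proportional to $e^{-\tilde g(\phi_1)}$ with $\tilde g(t)=\frac{\theta_2(n-1)}{2}t^2-\sum_{j\ge2}\log\cosh(\theta_2(t+\phi_j)+\theta_1)$. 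Writing $h(t):=p(t,\phi_0)-\frac{\theta_2}{2}\phi_0^2$ and using that $u\mapsto\log\cosh(\theta_2(t+u)+\theta_1)$ is $\theta_2$-Lipschitz, Cauchy--Schwarz gives
\[ |\tilde g(t)-(n-1)h(t)|\ \le\ \theta_2\sum_{j\ge2}|\phi_j-\phi_0|\ \le\ \theta_2\sqrt{(n-1)\,M} \]
on the event $\{\sum_{j\ge2}(\phi_j-\phi_0)^2\le M\}$. Since Lemma \ref{min} gives $h(t)-h(\phi_0)\in[\tfrac{\lambda_2}{2}(t-\phi_0)^2,\tfrac{\lambda_1}{2}(t-\phi_0)^2]$ for $t\in U$, the one-dimensional ratio $\int_{|t-\phi_0|>\delta}e^{-(n-1)h}\big/\int_U e^{-(n-1)h}$ is at most $\mathrm{poly}(n)\,e^{-\frac{\lambda_2}{2}(n-1)\delta^2}$ (with no exponential prefactor, as it is a single coordinate). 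Absorbing the two-sided error $e^{\pm\theta_2\sqrt{(n-1)M}}=e^{O(\sqrt n)}$, the conditional tail is $e^{-\Omega(n)}$ uniformly over $\phi_{-1}$ in the good event.

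Finally I would remove the good-event restriction: splitting on $\{\sum_{j\ge2}(\phi_j-\phi_0)^2\le M\}$ and its complement, the first piece is controlled by the uniform conditional bound just obtained and the second by (\ref{e3}) (since $\sum_{j\ge2}\le S$), both $e^{-\Omega(n)}$; the union bound over the $n$ exchangeable coordinates preserves $e^{-\Omega(n)}$. The delicate point throughout is that the perturbation error must be $o(n)$ in the exponent, and the $\sqrt n$ rate from Cauchy--Schwarz together with the $S=O(1)$ control from (\ref{e3}) is exactly what makes this work.
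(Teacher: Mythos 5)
Your proposal is correct and follows essentially the same route as the paper: for (\ref{e3}) the global Gaussian sandwich from Lemma \ref{min} plus a chi-square tail bound, and for (\ref{e4}) a reduction to a single coordinate conditioned on the remaining coordinates lying in the good event $\{\sum_{i\ge 2}(\phi_i-\phi_0)^2\le M\}$ supplied by (\ref{e3}), exactly as in the paper. The only (harmless) difference is technical: where you compare the conditional Hamiltonian of $\phi_1$ to $(n-1)p(t,\phi_0)$ via the $\theta_2$-Lipschitz property of $\log\cosh$, paying a factor $e^{O(\sqrt{n})}$, the paper instead applies the two-sided quadratic bound of Lemma \ref{min} pairwise to each $p(\phi_1,\phi_i)$, paying only the constant factor $e^{(\lambda_1-\lambda_2)M/2}$; both errors are absorbed by the Gaussian tail $e^{-\lambda_2(n-1)\delta^2/2}$.
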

\begin{proof}
Denote by $\P_{n,\beta,U}$ the probability $\P_{n,\beta}$ conditioned on the event $\phi\in U^n$.
For $\phi\in U^n$, an application of Lemma \ref{min} gives 
  \[\frac{(n-1)\lambda_2}{2}\sum\limits_{i=1}^n(\phi_i-\phi_0)^2\leq -\log f_n(\phi)  \leq \frac{(n-1)\lambda_1}{2}\sum\limits_{i=1}^n(\phi_i-\phi_0)^2,\]  where $f_n(.)$ is the unnormalized density corresponding to $\mathbb{F}_n$ (see definition (\ref{d0}). Thus for any   $M>0$,  
\begin{align*}
\mathbb{P}_{n,\beta,U}(\sum\limits_{i=1}^n(\phi_i-\phi_0)^2>M)
\leq\frac{\int\limits_{\{\sum\limits_{i=1}^n(\phi_i-\phi_0)^2>M\}}e^{-\frac{(n-1)\lambda_2}{2}\sum\limits_{i=1}^n(\phi_i-\phi_0)^2}d\phi}{\int\limits_{U^n}e^{-\frac{(n-1)\lambda_1}{2}\sum\limits_{i=1}^n(\phi_i-\phi_0)^2}d\phi}
\leq \Big(\frac{\lambda_1}{\lambda_2}\Big)^{\frac{n}{2}}\frac{\mathbb{P}(\chi_n^2\geq (n-1)\lambda_2 M)}{\mathbb{P}(\phi_0+\frac{Z}{\sqrt{(n-1)\lambda_1}}\in U)^n},
\end{align*} 
where $Z\sim N(0,1)$ and $\chi_n^2$ is a chi-square random variable with $n$ degrees of freedom. Also, the denominator converges to $1$ as $\phi_0\in  U$.
\\

Proceeding to bound the numerator, first note that by Markov's inequality,
\[\log\mathbb{P}(\chi_n^2\geq t)\leq  -\frac{t}{2}+\frac{n}{2}-\frac{n}{2}\log(\frac{n}{t}).\] Plugging in $t=(n-1)\lambda_2 M$ and letting $n\rightarrow\infty$ we conclude \[\limsup\limits_{n\rightarrow\infty}\frac{1}{n}\log \mathbb{P}_{n,\beta,U}(\sum\limits_{i=1}^n(\phi_i-\phi_0)^2>M)\leq \log\Big(\frac{\lambda_1}{\lambda_2}\Big)-\frac{\lambda_2 M-1}{2}+\frac{1}{2}\log(\lambda_2 M).\] Since the r.h.s. of the last inequality goes to $-\infty$ as $M\rightarrow\infty$, there exists $M<\infty$  such that the r.h.s. is negative, from which (\ref{e3}) follows. 
\\\\

Proceeding to prove (\ref{e4}),  an application of (\ref{e3}) gives
note that  
\begin{align*}
\P_{n,\beta,U}(|\phi_1-\phi_0|>\delta)=\P_{n,\beta,U}(|\phi_1-\phi_0|>\delta,\sum\limits_{i=2}^n(\phi_i-\phi_0)^2\le M)+e^{-\Omega(n)},
\end{align*}
where the first term in the right hand side can be written as
 \begin{align}\label{enew}
 \P_{n,\beta,U}(|\phi_1-\phi_0|>\delta,\sum\limits_{i=2}^n(\phi_i-\phi_0)^2\le M)
= \E_{\P_{n,\beta,U}}\Big[\P_{n,\beta,U}(|\phi_1-\phi_0|>\delta|\phi_2,\dots,\phi_n)1\{\sum\limits_{i=2}^n(\phi_i-\phi_0)^2\le M\}\Big].
\end{align}
The conditional density of $(\phi_1|\phi_i,i\geq 2)$ is proportional to $\prod\limits_{i=2}^ne^{-p(\phi_1,\phi_i)}$ with
\[e^{-(n-1)p(\phi_0,\phi_0)-\frac{(n-1)\lambda_1}{2}(\phi_1-\phi_0)^2-\frac{\lambda_1}{2}\sum\limits_{i=2}^n(\phi_i-\phi_0)^2}\leq  \prod\limits_{i=2}^ne^{-p(\phi_1,\phi_i)}\leq e^{-(n-1)p(\phi_0,\phi_0)-\frac{(n-1)\lambda_2}{2}(\phi_1-\phi_0)^2-\frac{\lambda_2}{2}\sum\limits_{i=2}^n(\phi_i-\phi_0)^2}\]
by Lemma \ref{min}, and so on the set $\{\sum\limits_{i=2}^n(\phi_i-\phi_0)^2\le M\}$,
\begin{align*}
\P_{n,\beta,U}(|\phi_1-\phi_0|>\delta|\phi_2,\cdots,\phi_n)
  \leq &e^{\frac{\lambda_1-\lambda_2}{2}\sum\limits_{i=2}^n(\phi_i-\phi_0)^2}\sqrt{\Big(\frac{\lambda_1}{\lambda_2}\Big)}\frac{\mathbb{P}(|Z|>\delta\sqrt{(n-1)\lambda_2})}{\mathbb{P}(\phi_0+\frac{|Z|}{\sqrt{(n-1)\lambda_1}}\in U)}\\
  \leq&e^{\frac{(\lambda_1-\lambda_2)M}{2}}\sqrt{\Big(\frac{\lambda_1}{\lambda_2}\Big)}\frac{\mathbb{P}(|Z|>\delta\sqrt{(n-1)\lambda_2})}{\mathbb{P}(\phi_0+\frac{|Z|}{\sqrt{(n-1)\lambda_1}}\in U)},
 \end{align*}
 where  $Z\sim N(0,1)$. 
Finally note that $\mathbb{P}(\frac{|Z|}{\sqrt{(n-1)\lambda_1}}\in U)$ converges to $1$ as before, and $\P(|Z|>\delta\sqrt{(n-1)\lambda_2})=e^{-\Omega(n)}$. Plugging these estimates back in (\ref{enew}) completes  the proof of (\ref{e4}).
\\

\end{proof}
Having proven the crucial Lemma \ref{exp}, the proof of Theorem \ref{ab} is immediate.
\begin{proof}[Proof of Theorem \ref{ab}]
By the calculations of section 3, for $\theta\in \Theta_1$ there exists a unique $\phi_0$ satisfying the conditions of Lemma \ref{min} with $U=\R$. Thus an application of (\ref{e4}) of Lemma \ref{exp} along with the representation (\ref{domm2}) gives the desired conclusion with $p_0=(\phi_0+1)/2$.
\end{proof}

The proof of Theorem \ref{ba} requires a further lemma which is specific to $\theta\in \Theta_2$.
\begin{lem}\label{lfin}
For $\theta\in\Theta_2,$ $$\Big|\mathbb{P}_{n,\beta}(\{\phi_i>0,1\leq i\leq n\})+\mathbb{P}_{n,\beta}(\{\phi_i<0,1\leq i\leq n\})-1\Big|\leq e^{-\Omega(n)}.$$
\end{lem}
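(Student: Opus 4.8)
The plan is to reduce the whole statement to the single estimate $\mathbb{P}_{n,\beta}(\mathrm{mix})\le e^{-\Omega(n)}$, where $\mathrm{mix}$ is the event that the coordinates $\phi_i$ are not all of the same sign. Writing $A_\pm:=\{\pm\phi_i>0\text{ for all }i\}$, the three events $A_+,A_-,\mathrm{mix}$ partition $\R^n$ up to a Lebesgue‑null set, so $\mathbb{P}_{n,\beta}(A_+)+\mathbb{P}_{n,\beta}(A_-)=1-\mathbb{P}_{n,\beta}(\mathrm{mix})$; moreover $\mathbb{P}_{n,\beta}(A_+)=\mathbb{P}_{n,\beta}(A_-)$ since $\theta_1=0$ makes $f_n$ invariant under $\phi\mapsto-\phi$. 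Hence the lemma follows once $\mathbb{P}_{n,\beta}(\mathrm{mix})$ is shown to be exponentially small. I decompose $\mathrm{mix}=\bigsqcup_{\emptyset\neq S\neq[n]}R_S$, where $R_S:=\{\phi_i>0\ (i\in S),\ \phi_i<0\ (i\notin S)\}$.

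The heart of the argument is a reflection that cancels the unknown normalizing constant exactly. For fixed $S$, the map $\phi\mapsto\psi$ with $\psi_i=|\phi_i|$ is a measure‑preserving bijection of $R_S$ onto $A_+$. Only the cross pairs are affected, and using (\ref{domm}) together with the definition of $q$ one finds the clean identity, for $a,b>0$,
\[
p(a,-b)-p(a,b)=\log\frac{\cosh(\theta_2(a+b))}{\cosh(\theta_2|a-b|)}=:D(a,b)>0 ,
\]
the strict positivity being immediate from $a+b>|a-b|$ and monotonicity of $\cosh$ on $[0,\infty)$. Consequently $\int_{R_S}f_n=\int_{A_+}f_n(\psi)\exp\!\big(-\sum_{i\notin S,\,l\in S}D(\psi_l,\psi_i)\big)\,d\psi$, and dividing by $\int f_n\ge\int_{A_+}f_n$ gives, with $W:=\sum_{\emptyset\neq S\neq[n]}\exp\big(-\sum_{i\notin S,\,l\in S}D(\psi_l,\psi_i)\big)$,
\[
\mathbb{P}_{n,\beta}(\mathrm{mix})\ \le\ \E_{\mathbb{F}_n\mid A_+}\big[\,W\,\big],
\]
the conditioning being on $\phi\in A_+=(0,\infty)^n$, which is exactly the hypothesis $U=(0,\infty)$ of Lemma \ref{exp} (with $\phi_0=m$).

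Next I would exploit the concentration of Lemma \ref{exp}. On the event $\{\max_i|\psi_i-m|\le\delta\}$, of conditional probability $1-e^{-\Omega(n)}$ by (\ref{e4}) of Lemma \ref{exp}, continuity of $D$ and $D(m,m)=\log\cosh(2\theta_2 m)>0$ give $D(\psi_l,\psi_i)\ge c:=\tfrac12\log\cosh(2\theta_2 m)$ once $\delta$ is fixed small enough. There every cross sum satisfies $\sum_{i\notin S,\,l\in S}D\ge c\,|S|(n-|S|)\ge c(n-1)$, so on this event $W\le\sum_{j=1}^{n-1}\binom{n}{j}e^{-c\,j(n-j)}$, which is $e^{-\Omega(n)}$ because each summand is $e^{-\Omega(n)}$ uniformly in $j$ (for $j\le n/2$ use $\binom nj\le (en/j)^j$ against $j(n-j)\ge jn/2$) and there are only $n-1$ terms.

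The main obstacle is the interplay between the $2^{n}$ sign patterns and the \emph{failure} of concentration: off the good event each summand of $W$ is only bounded by $1$, so the naive bound $W\le 2^{n}$ times a probability $e^{-\Omega(n)}$ need not be small unless the concentration rate exceeds $\log 2$. I expect to resolve this by replacing the maximal‑deviation event with the $L^{2}$‑event $\{\sum_i(\psi_i-m)^2\le M\}$ of (\ref{e3}), whose exponential rate can be driven above $\log 2$ by taking $M$ large (this is precisely the content of the $M\to\infty$ analysis in the proof of Lemma \ref{exp}). On that event at most $O(1)$ coordinates lie far from $m$, so every pattern whose smaller side exceeds this threshold still carries $\Omega(n)$ good cross pairs and its contribution sums to a convergent series, while the remaining $O(n^{O(1)})$ patterns with a tiny minority are only polynomially many and can each be estimated directly — the minority coordinates are localized to width $O(1/\sqrt n)$ by their $\Omega(n)$ cross‑interactions, so the Gaussian normalizations in numerator and denominator match and the gap $D>0$ again yields $e^{-\Omega(n)}$. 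Making this last patching of small‑minority patterns fully rigorous, rather than the clean reflection identity itself, is where I expect the bulk of the technical labour to lie.
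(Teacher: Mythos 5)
Your proof is correct and rests on the same engine as the paper's appendix argument --- the reflection $\phi\mapsto|\phi|$, which cancels the partition function exactly, and the strictly positive cross-pair penalty $D(a,b)=p(a,-b)-p(a,b)=\log\left[\cosh(\theta_2(a+b))/\cosh(\theta_2(a-b))\right]$ --- but the surrounding bookkeeping is genuinely different. The paper never conditions on the positive orthant and never reuses Lemma \ref{exp}: it proves a separate concentration result (Lemma \ref{l21}, via quadrant-wise Gaussian comparison) saying that all but a bounded number $M_1$ of the $|\phi_i|$ lie in $I=(m/2,3m/2)$; it then handles balanced sign patterns by noting that having more than $M_2$ coordinates in each of $\pm I$ forces at least $M_2(n-M_1-M_2)$ cross pairs, each costing a fixed $C_1>0$, and beats the $2^n$ quadrants by taking $M_2C_1>\log 2$. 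You beat the same $2^n$ by a different knob: pushing the rate of the $L^2$-event in (\ref{e3}) above $\log 2$ via large $M$. Both are valid entropy-versus-energy arguments; yours has the advantage of recycling Lemma \ref{exp} (legitimately, since $U=(0,\infty)$, $\phi_0=m$ is exactly the $\Theta_2$ case of Lemma \ref{min}) rather than proving a fresh lemma. On the nearly-unanimous patterns your treatment is in fact more careful than the paper's: the paper asserts a uniform $C_2>0$ with $p(x,y)-p(|x|,|y|)\ge C_2$ for all $x\in I$, $y\le 0$, on compactness grounds, but the penalty vanishes as $y\to 0^-$ (the domain is not compact there), and this ``barely negative minority coordinate'' degeneracy is precisely what your small-minority patching step is designed to absorb --- the correct mechanism being, as you indicate, that after reflection such a coordinate sits near $0$, far from $m$, so the configuration is itself exponentially unlikely under $\mathbb{F}_n$ conditioned on the orthant. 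Finally, that last step can be made rigorous more simply than your $O(1/\sqrt n)$-localization sketch suggests: for each fixed pattern $S$, split on the event $\{\max_i|\psi_i-m|\le\delta\}$, whose conditional probability is $1-e^{-\Omega(n)}$ by (\ref{e4}) of Lemma \ref{exp}, uniformly in $S$; on this event the cross sum is at least $c(n-K)$ with $K=M/\delta^2$ the maximal number of coordinates deviating by more than $\delta$, and off it the summand is at most $1$; summing the resulting uniform $e^{-\Omega(n)}$ bound over the polynomially many small-minority patterns completes the argument.
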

The proof of Lemma \ref{lfin} uses a detailed analysis of the function $f_n(\phi)$, and has been moved to the appendix.
\\

The proof of Theorem \ref{ba} is again immediate from Lemma \ref{lfin}.
\begin{proof}[Proof of Theorem \ref{ba}]
 Since $\theta_1=0$, the density of $\phi$ is symmetric in the sense $f_n(\phi)=f_n(-\phi)$. This along with Lemma \ref{lfin} gives 
 $$\Big|\P_{n,\beta}(\phi_i>0,1\le i\le n)-\frac{1}{2}\Big|=e^{-\Omega(n)}.$$
 Since the conditions of Lemma \ref{min} hold with $\phi_0=m, U=(0,\infty)$, an application of Lemma \ref{bound} gives that
 $$\P_{n,\beta}(|\phi_i-\phi_0|>\delta\text{ for some }i,1\le i\le n|\phi\in (0,\infty)^n)\le e^{-\Omega(n)}.$$ Combining these two results give
 $$\Big|\P_{n,\beta}(|\phi_i-\phi_0|>\delta\text{ for some }i,1\le i\le n)-\frac{1}{2}\Big|\le e^{-\Omega(n)}.$$ This along with the representation (\ref{domm2}) gives 
 $$\Big|\P_{n,\beta}(|\frac{d_i}{n-1}-p_1|>\delta\text{ for some }i,1\le i\le n)-\frac{1}{2}\Big|\le e^{-\Omega(n)},$$ where $p_1=\frac{m+1}{2}$. A similar argument shows that $$\Big|\P_{n,\beta}(|\frac{d_i}{n-1}-p_2|>\delta\text{ for some }i,1\le i\le n)-\frac{1}{2}\Big|\le e^{-\Omega(n)},$$ where $p_2=\frac{1-m}{2}$, thus completing the proof of the theorem.
\end{proof}

\section{Simulations}

In all the simulations below the number of vertices $n$ has been taken to be $n=1000$, and the burn in period has been taken to be $500$. The plotted diagrams are the histograms of the scaled degree distributions, i.e. histograms of the vector $\{d_i/(n-1),1\le i\le n\}$.

\subsection{Domain $\Theta_{11}$}
The   parameters chosen for the first diagram are $\theta_1=0,\theta_2=.25$.

\begin{figure}[htbp]
\begin{center}
\includegraphics[height=5in,width=5in]{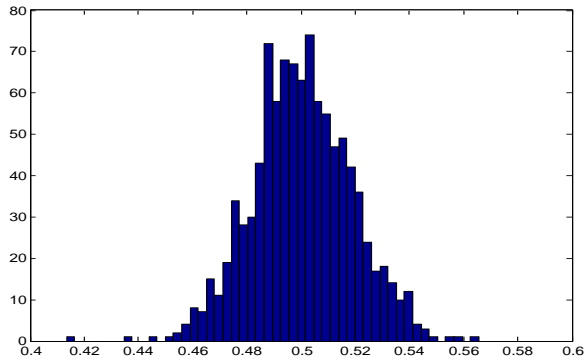}  
\caption{Histogram of degrees in domain $\Theta_{11}$}
\end{center}
\end{figure}
The histogram has a high mass near $0.5$. This agrees with Theorem \ref{ab}, which predicts that this domain all scaled degrees will converge to $p_0=.5$.  The maximum and minimum scaled degree are $0.5656$ and $0.4134$ respectively, and the average is $0.5005$.

\subsection{Domain $\Theta_{12}$}
The   parameters for the second figure are $\theta_1=.25,\theta_2=.25$.  

\begin{figure}[htbp]
\begin{center}
\includegraphics[height=5in,width=5in]{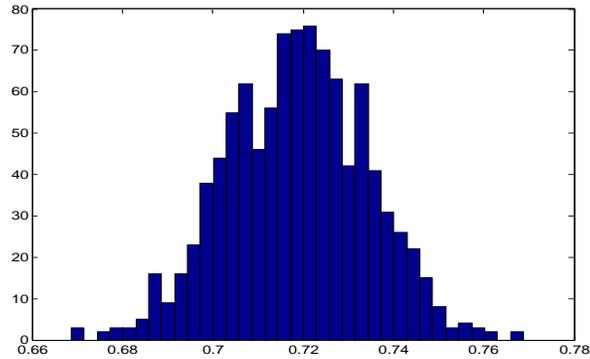}  
\caption{Histogram of degrees in domain $\Theta_{12}$}
\end{center}
\end{figure}
The histogram has a high mass near $0.72$.  The maximum and minimum scaled degree are $0.7688$ and $0.6687$ respectively, and the average is $0.7188$. In this domain the predicted limit of scaled degrees is as follows:

The limit is given by $p_0=(m+1)/2$, where $m$ is the unique positive root of $t=\tanh(2\theta_2+\theta_1)$. A plot of $t$ vs $\tanh(2\theta_2t+\theta_1)$ gives the approximate intersection point to be $m=0.4370$, which gives $p_0=0.7185$. Thus the theoretical predictions agree with the simulation results. 

\begin{figure}[htbp]
\begin{center}
\includegraphics[height=5in,width=5in]{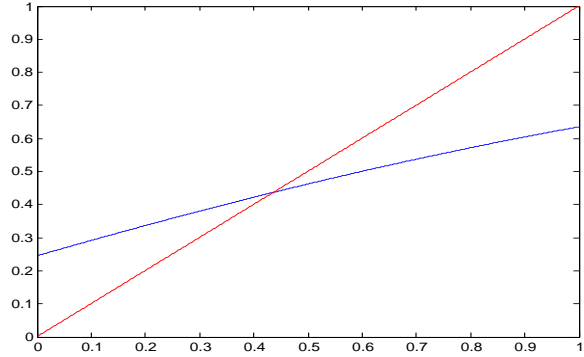}  
\caption{Plot of $t$ vs $\tanh(2\theta_2t+\theta_1)$ for $\theta_1=\theta_2=0.25$}
\end{center}
\end{figure}

\newpage
\subsection{Domain $\Theta_{2}$}

The third and fourth figures correspond to two independent simulations of the histogram of the scaled degree distribution from the model with parameter $\theta_1=0,\theta_2=0.55$. 
\\

\begin{figure}[htbp]
\begin{center}
\includegraphics[height=6in,width=6in]{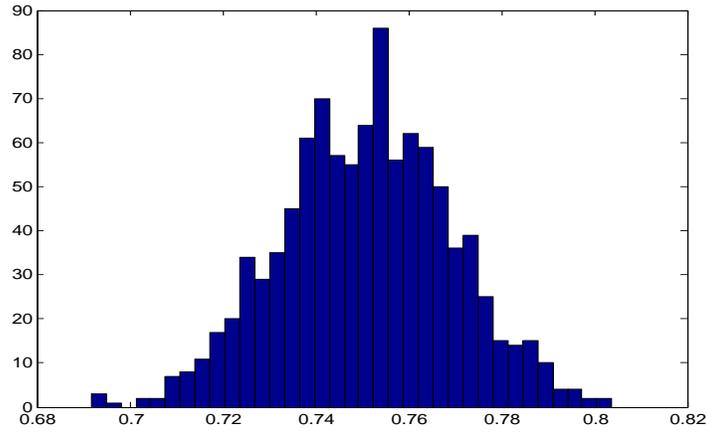}  
\caption{Histogram of degrees in domain $\Theta_2$}
\end{center}
\end{figure}

In the first  simulation the histogram  has a high mass near $0.75$. The maximum and minimum scaled degree are $0.8038$ and $0.6917$ respectively, and the average is $0.7510$. 
\newpage

\begin{figure}[htbp]
\begin{center}
\includegraphics[height=6in,width=6in]{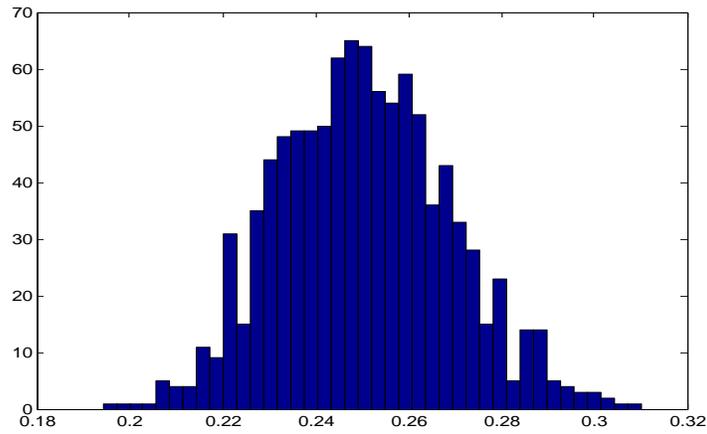}  
\caption{Histogram of degrees in domain $\Theta_2$}
\end{center}
\end{figure}

In the second simulation the histogram  has a high mass near $0.25$. The maximum and minimum scaled degree are $0.3103$ and $0.1942$ respectively, and the average is $0.2507$. 

\newpage

 Theorem \ref{ba} predicts this dual behavior, and further gives a way to compute  the two limits as follows:
 
 The limiting scaled degrees will converge to either $p_1=\frac{1+m}{2}$ or $p_2=\frac{1-m}{2}$, where $m$ is the unique positive root of the  equation $t=\tanh(2\theta_2t)$.

\begin{figure}[htbp]
\begin{center}
\includegraphics[height=6in,width=5in]{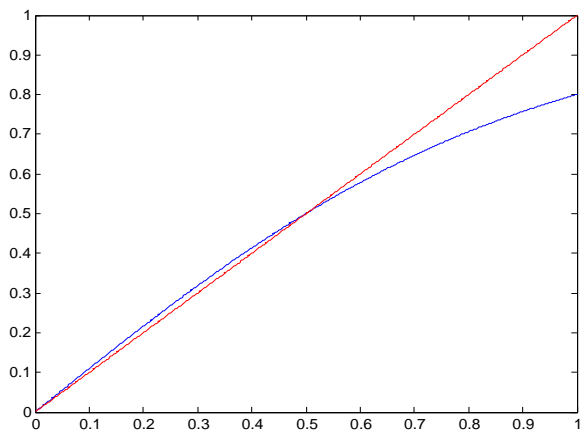}  
\caption{Plot of $t$ vs $\tanh(2\theta_2t)$ for $\theta_2=0.55$}
\end{center}
\end{figure}
From a simultaneous plot of $t=\tanh(2\theta_2t)$, the approximate point of intersection is $m=0.5020$, which gives $p_1=0.7510$ and $p_2=0.2490$, thus again agreeing with the simulations.
\newpage
\section{Conclusion}
The  phase transition of the edge two star model has been illustrated by  theoretical results as well as simulations. The different parameter domains corresponding to phase transition behavior has been explicitly characterized.  A simulating algorithm using auxiliary variables has been proposed for simulating from this model. 
\\

Unlike \cite{CD}, the calculations in this paper is very specific to the edge two star model, and does not generalize to other ERGMs such as the edge triangle model. It would be interesting to see if such concentration of degrees holds for such models.
\section{Acknowledgement}

I'm  grateful to my advisor Dr. Persi Diaconis for introducing me to this problem, and for his continued help and support during my Ph.D.

\section{Appendix}
The appendix carries out a proof of Lemma \ref{lfin}. 
Recall from section 3 that in this domain $p(x,y)$ has two global minima at $\pm(m,m)$. The first lemma shows  that most of  the $|\phi_i|$'s are close to  $m$ with high probability.  

\begin{lem}\label{l21}
If $\theta\in \Theta_2$,  there exists $M_1<\infty$ such that
     \begin{align*}
\mathbb{P}_{n,\beta}(\#\{i:|\phi_i|\notin I\}> M_1)\leq e^{-\Omega(n)},
\end{align*}
where $I=:(m/2,3m/2)$.
\end{lem}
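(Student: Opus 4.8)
The plan is to reduce the statement, via Lemma \ref{bound}, to a concentration problem on the compact box $[m_1,m_2]^n$, and then to run a two-scale argument: a coarse, order-$n^2$ estimate showing that the empirical measure of $\phi$ sits near one of the two point masses $\delta_{\pm m}$, followed by a fine, order-$n$ single-coordinate estimate that upgrades this to the claim. Throughout I split the box into the three regions $R_+:=\{\phi\in I\}$, $R_-:=\{-\phi\in I\}$ and the bad set $R_0:=[m_1,m_2]\setminus(R_+\cup R_-)$; writing $a,b,k$ for the numbers of coordinates in $R_+,R_-,R_0$, the assertion is exactly that $k\le M_1$ with probability $1-e^{-\Omega(n)}$. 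Since $\theta_1=0$ makes $f_n$ invariant under $\phi\mapsto-\phi$ (and $R_0$, $I$ are symmetric), I may treat the two wells symmetrically.

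\emph{Coarse step.} Using (\ref{domm}), $p(x,y)=q(x+y)+\tfrac{\theta_2}{4}(x-y)^2$ together with $q\ge q^*:=q(2m)$ from the $\Theta_2$ analysis of Section 3, one gets for every probability measure $\mu$ on $[m_1,m_2]$ that $\iint p\,d\mu\,d\mu\ge q^*$, with equality iff $\mu\in\{\delta_m,\delta_{-m}\}$ (equality forces zero variance and $q(X+Y)=q^*$ a.s.). Since $\sum_{i<j}p(\phi_i,\phi_j)=\tfrac{n^2}{2}\iint p\,dL_n\,dL_n+O(n)$, where $L_n$ is the empirical measure of $\phi$, a Laplace lower bound on the normalizing constant (from configurations with all $\phi_i\approx m$), combined with the strict bound $\iint p\,d\mu\,d\mu\ge q^*+\rho(\varepsilon)$ valid whenever $\mu$ is $W_1$-far from $\{\delta_{\pm m}\}$ (here lower semicontinuity and compactness are used), shows
\[\mathbb{P}_{n,\beta}\Big(\min\big(\textstyle\int|y-m|\,dL_n,\ \int|y+m|\,dL_n\big)>\varepsilon\Big)\le e^{-\Omega(n^2)}.\]
Thus, off an $e^{-\Omega(n^2)}$ event, $L_n$ is $W_1$-close to one point mass; by symmetry I condition on the majority well being $R_+$, so that the empirical measure $\bar L$ of any $n-1$ of the coordinates satisfies $\int|y-m|\,d\bar L\le 2\varepsilon$.

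\emph{Fine step.} I then condition $\phi_1$ on $(\phi_2,\dots,\phi_n)$. As in the proof of Lemma \ref{exp}, the conditional density is proportional to $e^{-(n-1)\Psi(\phi_1)}$ with $\Psi(\phi_1):=\tfrac{\theta_2}{2}\phi_1^2-\int\log\cosh(\theta_2(\phi_1+y))\,d\bar L(y)$. Because $y\mapsto\log\cosh(\theta_2(\phi_1+y))$ is $\theta_2$-Lipschitz, $|\Psi(\phi_1)-h(\phi_1)|\le\theta_2\int|y-m|\,d\bar L\le 2\theta_2\varepsilon$ uniformly in $\phi_1$, where $h(\phi):=\tfrac{\theta_2}{2}\phi^2-\log\cosh(\theta_2(\phi+m))$. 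A calculus analysis of $h$ — using $m=\tanh(2\theta_2 m)$ to locate its critical points — shows that $h$ has a unique global minimum on $[m_1,m_2]$ at $\phi=m\in R_+$, with $h''(m)=q''(2m)+\tfrac{\theta_2}{2}>0$, and that $\gamma:=\inf_{R_0}h-h(m)>0$. Taking $\varepsilon<\gamma/(8\theta_2)$, the $2\theta_2\varepsilon$-perturbation preserves a gap $\gamma/2$ between $\inf_{R_0}\Psi$ and $\inf\Psi$, and a Laplace estimate on the conditional normalization (using $h''(m)>0$) gives $\mathbb{P}_{n,\beta}(\phi_1\in R_0\mid \phi_2,\dots,\phi_n)\le e^{-\Omega(n)}$ on the coarse-good event. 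Summing over coordinates, $\e[\#\{i:\phi_i\in R_0\}\,\mathbf 1_{\mathrm{good}}]\le n\,e^{-\Omega(n)}=e^{-\Omega(n)}$, and adding the $e^{-\Omega(n)}$ probability of the complementary event, Markov's inequality yields the lemma with any finite $M_1$ (in fact with $M_1=0$).

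The main obstacle is the deterministic input of the fine step: that $h(\phi)=\tfrac{\theta_2}{2}\phi^2-\log\cosh(\theta_2(\phi+m))$ attains its minimum on $[m_1,m_2]$ only at $\phi=m$, with a quantitative gap over the whole complement $R_0=[m_1,m_2]\setminus(I\cup -I)$. This is precisely where the choice of the basin $I=(m/2,3m/2)$ and the condition $\theta_2>1/2$ enter (for instance in verifying $h(m)<h(0)$), and it must be checked by directly analyzing the fixed-point equation $\phi=\tanh(\theta_2(\phi+m))$. The second delicate point is the coarse step: upgrading the pointwise bound $q\ge q^*$ to the $W_1$-rigidity $\iint p\,d\mu\,d\mu\ge q^*+\rho$ away from $\{\delta_{\pm m}\}$, and converting the resulting $n^2$-scale energy gap into a probability bound through matching Laplace estimates on the partition function.
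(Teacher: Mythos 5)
Your proposal is correct in substance but takes a genuinely different route from the paper. The paper's proof is a direct domination argument: using $p(x,y)\ge p(|x|,|y|)$ it folds each of the $2^n$ sign patterns onto the first quadrant, applies the two-sided quadratic bounds of Lemma \ref{min} (with $U=(0,\infty)$, $\phi_0=m$) to sandwich $f_n$ between product Gaussians centered at $(m,\dots,m)$, and bounds the probability by $2^n\bigl(\lambda_1/\lambda_2\bigr)^{n/2}\binom{n}{M_1}e^{-M_1\Omega(n)}$, finally choosing the fixed constant $M_1$ large enough that the factor $e^{-M_1\Omega(n)}$ beats the exponential prefactors. Your two-scale argument instead pins the empirical measure near one of $\delta_{\pm m}$ at rate $e^{-\Omega(n^2)}$ and then runs a single-site conditional Laplace estimate; this avoids paying the $2^n$ quadrant factor altogether, which is exactly why you can conclude with $M_1=0$ — a strictly stronger statement than the lemma as stated (the paper only gets a large finite $M_1$, and then needs the additional steps (\ref{fg4})--(\ref{fg6}) in the proof of Lemma \ref{lfin} to control the stray coordinates, steps your approach would essentially render unnecessary). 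The price is machinery: you need the $W_1$-rigidity of $\mu\mapsto\iint p\,d\mu\,d\mu$, matching Laplace bounds on the normalizing constant, and a conditional analysis, none of which the paper's five-line computation requires.

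Two remarks. First, your flagged ``main obstacle'' — the gap $\gamma=\inf_{R_0}h-h(m)>0$ and uniqueness of the minimizer of $h$ — needs no analysis of the fixed-point equation $\phi=\tanh(\theta_2(\phi+m))$ at all: since $h(x)-h(m)=p(x,m)-p(m,m)$, the decomposition (\ref{domm}) gives $h(x)-h(m)=\bigl[q(x+m)-q(2m)\bigr]+\tfrac{\theta_2}{4}(x-m)^2\ge\tfrac{\theta_2}{4}(x-m)^2\ge\tfrac{\theta_2 m^2}{16}$ for all $x\notin I$, using only the fact from Section 3 that $q\ge q(2m)$ on $\mathbb{R}$; the same inequality with $x\in -I$ gives a gap $\ge\tfrac{9\theta_2 m^2}{16}$, so your fine step also excludes the minority well and thus nearly proves Lemma \ref{lfin} itself. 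Second, two small points of hygiene: the event you condition on must be measurable with respect to $(\phi_2,\dots,\phi_n)$ alone, so define it through $\bar L$ (your passage from $L_n$ to $\bar L$ accomplishes this, but it should be stated as the definition of the good event); and you should arrange $m\in(m_1,m_2)$ — harmless, since the interval in Lemma \ref{bound} can be enlarged — so that the Laplace lower bounds on the conditional normalizations have room around $m$. With these details filled in, your argument is complete and correct.
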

  
\begin{proof}
  
Denoting the above set by $A_n$, it suffices to show that 
\begin{align}\label{fg3}
\frac{\int\limits_{A_n}f_n(\phi)d\phi}{\int\limits_{I^n}f_n(\phi)d\phi}\leq e^{-\Omega(n)},
\end{align} 
Proceeding to show $(\ref{fg3})$, note that $p(x,y)\geq p(|x|,|y|)$, and so for any quadrant $\mathcal{Q}$ (out of the $2^n$ possible),
\begin{align*}
\int\limits_{\mathcal{Q}\cap A_n}f_n(\phi)d\phi\leq \int\limits_{\mathcal{Q}_1\cap {A}_n}f_n(\phi)d\phi,
\end{align*}
where $\mathcal{Q}_1$ is the first quadrant in $\mathbb{R}^n$. 
Concentrating on $x,y>0$, using Lemma \ref{min} gives that there exists positive constants $\lambda_1>\lambda_2$ such that for all $x,y>0$ we have 
\[\frac{\lambda_2}{2}[(x-m)^2+(y-m)^2\leq p(x,y)\leq \frac{\lambda_1}{2}[(x-m)^2+(y-m)^2].\] Thus
\begin{align*}
\frac{\int\limits_{\mathcal{Q}_1\cap A_n}f_n(\phi)d\phi}{\int\limits_{I^n}f_n(\phi)d\phi}\leq \Big(\frac{\lambda_1}{\lambda_2}\Big)^{n/2}\frac{\mathbb{P}(\#\{i:\frac{Z_i}{\sqrt{(n-1)\lambda_2}|}>m/2\}>M_1 )}{\mathbb{P}(\frac{|Z|}{\sqrt{(n-1)\lambda_1}}<m/2)^n}
\end{align*}
where $Z,Z_i\stackrel{i.i.d.}{\sim}N(0,1)$. The probability in the denominator converges to $1$. By a union bound over the possible choice of indices, the probability in the numerator is bounded by $$\Big({{n}\atop{M_1}}\Big)e^{-M_1\Omega(n)}.$$ Summing up over all the $2^n$ quadrants gives
\[\mathbb{P}_{n,\beta}(\#\{i:|\phi_i|\notin I,1\leq i\leq n\}>M_1)\leq 2^n\Big(\frac{\lambda_1}{\lambda_2}\Big)^{n/2}\Big({{n}\atop{M_1}}\Big)e^{-M_1\Omega(n)}.\]
Choosing $M_1$ fixed but large enough gives (\ref{fg3}), and hence concludes the proof of the Lemma
\\
\end{proof}

Building on Lemma \ref{l21} the proof of Lemma \ref{lfin} is carried out next.

\begin{proof}[Proof of Lemma \ref{lfin}]
Letting \begin{align*}
I_1=&I_1(\phi):=\{i:\phi_i\in I\},\\
I_2=&I_2(\phi):=\{i:\phi_i\in -I\},
\end{align*}
the first claim is that there exists $M_2<\infty$ such that
\begin{align}\label{fg4}
\mathbb{P}_{n,\beta}(I_1>M_2,I_2>M_2)\leq e^{-\Omega(n)}.
\end{align} 
To show (\ref{fg4}) first note that   there exists $C_1>0$ such that  for all $x\in I,y\in -I$,
\begin{align*}
p(x,y)-p(|x|,|y|)\ge C_1.
\end{align*}
Indeed, this follows from the fact that $p(x,y)>p(|x|,|y|)$ for all $(x,y)$ on $I\times -I$  which is compact.  But  this readily gives 
$f_n(\phi)\leq f_n(|\phi|)e^{-I_1I_2C_1}$. Now
\begin{align}\label{fg5}
\mathbb{P}_{n,\beta}(I_1>M_2,I_2>M_2)\leq&\mathbb{P}_{n,\beta}(I_1>M_2,I_2>M_2,I_1+I_2\ge n-M_1)+\mathbb{P}_{n,\beta}(I_1+I_2<n-M_1),
\end{align}
with the second term  bounded by $e^{-\Omega(n)}$ by Lemma \ref{l21}. For the first term note that the events $$I_1>M_2,I_2>M_2,I_1+I_2\geq n-M_1$$
imply $ I_1I_2>M_2(n-M_1-M_2),$ and so 
\begin{align*}
\mathbb{P}_{n,\beta}(I_1>M_2,I_2>M_2,I_1+I_2\ge n-M_1)\leq \frac{2^n  e^{-M_2(n-M_1-M_2)C_1}\int\limits_{\mathcal{Q}_1}f_n(\phi)d\phi}{\int\limits_{\mathcal{Q}_1}f_n(\phi)d\phi}=2^ne^{-M_2(n-M_1-M_2)C_1}
\end{align*}
Thus choosing $M_2$ large enough enough gives (\ref{fg4}). Combining Lemma \ref{l21} and (\ref{fg4}) readily gives
\begin{align}\label{fg6}
\mathbb{P}_{n,\beta}(I_1<n-M_3,I_2<n-M_3)\leq e^{-\Omega(n)},
\end{align}
with $M_3:=M_1+M_2$, i.e. with high probability at least $n-M_3$ of the $\phi_i$'s are in exactly one of $\pm I$. 

To complete the proof of the lemma, setting $J_1:=\{i:\phi_i>0\},J_2:=\{i:\phi_i<0\}$, it suffices to show that
\[\mathbb{P}_{n,\beta}(J_1<n,J_2<n)\leq e^{-\Omega(n)}.\] To this effect, note that
$J_1\geq I_1,J_2\geq I_2$ and so  (\ref{fg6}) gives 
\begin{align*}
\mathbb{P}_{n,\beta}(J_1<n,J_2<n)\leq2\mathbb{P}_{n,\beta}(J_1<n,J_2<n,I_1\geq n-M_3)+\mathbb{P}_{n,\beta}(I_1<n-M_3,I_2<n-M_3)
\end{align*}
The second term is $e^{-\Omega(n)}$ by (\ref{fg5}). Turning to deal with the first term, note that there exists $C_2>0$ such that for  $x\in I,y\leq 0$,
\begin{align*}p(|x|,|y|)-p(x,y)\geq C_2.\end{align*}
 Indeed,  this function is positive point-wise on compact subsets of $I\times (-\infty,0)$, and their difference goes to $\infty$ if $y\rightarrow-\infty$. 
\\

Now the events $$J_1+J_2=n,\quad J_1\ge I_1>n-M_3,\quad J_2\geq  1$$ imply that there exists at least $(n-M_3)$ pairs $(i,j)$ such that $\phi_i\in I,\phi_j<0$. This readily gives  $f_n(\phi)\leq f_n(|\phi|)e^{-(n-M_3)C_2}$. Also $I_1\geq n-M_3$ can occur only on at most $2^{M_3}\Big({{n}\atop{M_3}}\Big)$ quadrants, and so by a union bound,
\begin{align*}
\mathbb{P}_{n,\beta}(J_1<n,J_2<n,I_1\geq n-M_3)\leq 2^{M_3}\Big({{n}\atop{M_3}}\Big)e^{-(n-M_3)d_2}\frac{\int\limits_{\mathcal{Q}_1}f_n(\phi)d\phi}{\int\limits_{\mathcal{Q}_1}f_n(\phi)d\phi}\leq 
(2n)^{M_3}e^{-(n-M_3)d_2}\leq e^{-\Omega(n)},
\end{align*}
completing the proof of the lemma.

\end{proof}


\begin{thebibliography}{9}

\bibitem[ACW]{ACW}
C. J. Anderson, S. Wasserman and B. Crouch, 
{\it A $p^*$ primer: logit models for social networks}. Social Networks 21: 37-66, 1999.



\bibitem[B1]{B1}J. Besag,  {\it Spatial Interaction and the Statistical Analysis of Lattice Systems.} Journal of the Royal Statistical Society. Series B (Methodological). 36 (2): 192-236, 1974.

\bibitem[B2]{B2}
J, Besag,  {\it Statistical Analysis of Non-Lattice Data.} Journal of the Royal Statistical Society. Series D (The Statistician). 24 (3): 179-195, 1975.

 \bibitem[BSB]{BSB}
 S. Bhamidi, S, A.Sly and G. Bresler, {\it  Mixing time of exponential random graphs},
 Annals of Applied Probability 21, 2146-2170, 2011.
 
 \bibitem[BD]{BD}
J. Blitzstein and P. Diaconis, {\it A Sequential Importance Sampling Algorithm for Generating Random Graphs with Prescribed Degrees.} Internet Mathematics. 6 (4): 489-522, 2011.
 
 \bibitem[C]{C}
S. Chatterjee, {\it  Estimation in spin glasses: A first step.}  Ann. Statist., 35 no. 5, 1931-1946, 2007.
 
 \bibitem[CD]{CD}S. Chatterjee and P. Diaconis. {\it Estimating and Understanding Exponential Random Graph Models}. To appear in Ann. Statist.

 
 
 \bibitem[CDS]{CDS}S. Chatterjee, P. Diaconis and A.Sly,  {\it Random graphs with a given degree sequence.} Annals of Applied Probability, 21, 4, 1400-1435, 2011.

\bibitem[CS]{CS}
S. Chatterjee and  Q.M. Shao, {\it Nonnormal approximation by SteinÕs method of exchangeable pairs with application to the CurieÐWeiss model.} The Annals of Applied Probability. 21 (2): 464-483, 2011.

\bibitem[DM]{DM}
A. Dembo and A. Montanari, {\it  Gibbs measures and phase transitions on sparse random graphs. }Brazilian J. of Probab. and Stat. 24, pp. 137-211, 2010.

\bibitem[FS]{FS}
O. Frank and D. Strauss,  {\it Markov Graphs}.
J. Amer. Statist. Assoc. , 81, 832Ð842, 1986.



\bibitem[Gr]{Gr}
Grimmett, Geoffrey, {\it The random-cluster model.} Berlin ; New York : Springer, c2006.

\bibitem[GT]{GT}
C. Geyer and E. Thompson, {\it Constrained Monte Carlo Maximum Likelihood for Dependent Data. }Journal of the Royal Statistical Society. Series B (methodological), 54, 3, 657-699, 1992.

\bibitem[H]{H}
M. Handcock,
{\it Assessing Degeneracy in Statistical Models of Social Networks.}
Working Paper no. 39, Center for Statistics and the Social Sciences University of Washington, 2003.

\bibitem[HL]{HL}
P. Holland and S. Leinhardt, {\it An Exponential Family of Probability Distributions for Directed Graphs}. Journal of the American Statistical Association. 76 (373): 33-50, 1981.

\bibitem[MHH]{MHH}
Morris, M., Hunter, D., and Handcock, M.,
{\it Specification of Exponential-Family Random Graph Models: Terms and Computational Aspects.}
 Journal of Statistical Software 42(i04), 2008.

\bibitem[Newman]{Newman}
M.E.J. Newman, 
{\it The Structure and Function of Complex Networks}. 
SIAM Rev., 45(2), 167Ð256, 2003.


\bibitem[PN]{PN}
J. Park and M.E.J. Newman, 
{\it  Solution of the 2-star model of a network},
Phys. Rev. E 70, 066146, 2004.

\bibitem[PN2]{PN2}
J. Park and M.E.J. Newman, {\it Solution for the properties of a clustered network.} Phys. Rev. E (3), 72 026136, 5, 2005.

\bibitem[PW]{PW}
S. Wasserman and P. Pattison, {\it Logit models and logistic regressions for social networks. I. An introduction to Markov graphs and p}. Psychometrika , 61, 401Ð425, 1996.




 

 

\bibitem[R]{R}
G. Roussas, {\it Contiguity of probability measures: some applications in statistics.} Cambridge : Cambridge University Press, 1972.

\bibitem[RPKL]{RPKL}
G. Robins, P. Pattison, Y. Kalish and D. Lusher, {\it  An introduction to exponential random graph $p^*$ models for social networks.} Social Networks, 29, 2, 173-191, 2007.

\bibitem[Snijders]{Snijders}
T. A. B. Snijders, {\it Markov chain Monte Carlo estimation of exponential random graph models}. J. Social Structure , 2 ,2002.




\bibitem[Strauss]{Strauss}
D. Strauss, {\it On a General Class of Models for Interaction}. SIAM Review. 28 (4): 513-527, 1986.



\bibitem[SPRH]{SPRH}
Snijders, T.A.B., Pattison, P, Robins, G.L., and Handcock, M.S.,
{\it  New Specifications for Exponential Random Graph Models. }
Sociological Methodology.;36:99Ð153 (2006)


\bibitem[WF]{WF}
S. Wasserman and K. Faust, {\it Social network analysis: Methods and applications.} Cambridge: Cambridge University Press, 1994.





\end{thebibliography}
\end{document}